\numberwithin{equation}{section}
\newtheorem{theorem}{Theorem}[section]
\newtheorem{lemma}[theorem]{Lemma}
\newtheorem{proposition}[theorem]{Proposition}
\newtheorem{definition}[theorem]{Definition}
\newcommand{\N}{\mathbb{N}}
\newcommand{\be}{\begin{equation}}
\newcommand{\ee}{\end{equation}}
\newcommand{\R}{\mathbb{R}}
\renewcommand{\d}{{\mathrm d}}
\newcommand{\Sd}{\operatorname{S}^d}
\newcommand{\spt}{{\rm{spt}}}
\newcommand{\Id}{\operatorname{Id}}
\newcommand{\restr}[1]{\lower3pt\hbox{$|_{#1}$}}
\begin{document}

\title[Duality Theory for Multi-marginal OT in metric spaces]
{Duality Theory for Multi-marginal Optimal Transport with repulsive costs in metric spaces}

\author{Augusto Gerolin}
\author{Anna Kausamo}\author{Tapio Rajala}
\address{Department of Mathematics and Statistics \\
         P.O. Box 35 (MaD) \\
         FI-40014 University of Jyv\"askyl\"a \\
         Finland}
\email{augusto.gerolin@jyu.fi}		 
\email{anna.m.kausamo@jyu.fi}
\email{tapio.m.rajala@jyu.fi}


\thanks{The authors acknowledge the Academy of Finland projects no. 274372, 284511 and 312488.}
\subjclass[2000]{}
\date{\today}

\begin{abstract}
In this paper we extend the duality theory of the multi-marginal optimal transport problem for cost functions depending on a decreasing function of the distance (not necessarily bounded). This class of cost functions appears in the context of SCE Density Functional Theory introduced in \emph{Strong-interaction limit of density-functional theory} by M. Seidl \cite{Sei}.
\end{abstract}

\maketitle

\tableofcontents

\section{Introduction}
We consider the following multi-marginal optimal transport (MOT) problem 
\be\label{MKO}
\inf_{\gamma \in \Gamma(\rho)}\int_{X^N}c(x_1,\ldots,x_N)\,\d \gamma(x_1,\ldots, x_N),
\ee
where $(X,d)$ is a Polish space and  $\Gamma(\rho)$ denotes the set of Borel probability measures in $X^N$ having all $N$ marginals equal to a Borel probability measure $\rho$. We are interested in cost functions of the type
\[
c(x_1,\ldots, x_N)=\sum_{1\le i<j\le N}f(d(x_i,x_j)),
\]
where $f\colon]0,+\infty[\to \R$ is a continuous, decreasing function, not necessarily bounded above or below.
An interesting example of such cost is given by minus the logarithmic: $f(d(x,y)) = -\log(d(x,y))$. 

Our aim is to study properties of the so-called \textit{Kantorovich formulation} of \eqref{MKO} for such costs
\be \label{KMOT}
\sup \bigg\lbrace N\int_{X}u\,\d\rho ~\bigg|~ u\in L^1_\rho(X),\sum^N_{i=1}u(x_i)\le c(x_1,\ldots, x_N) \text{ for }\rho ^{\otimes (N)}\text{-a.e. } (x_1,\dots,x_N)\bigg\rbrace,
\ee
where $\rho ^{\otimes (N)}$ denotes the product of $N$ measures $\rho$. Optimal Transport problems with logarithmic-type costs were first considered in the literature by W. Wang \cite{Wang} and W. Gangbo and V. Oliker \cite{GanOli} motivated by the \textit{reflector problem}. In this case, $X = \Sd$, $N=2$ and the authors show the existence of optimal transport plans $\gamma = (\operatorname{Id},T)_{\sharp}\rho$ in \eqref{MKO} concentrated on the graph of a map $T\colon\Sd \to \Sd$. Generally, in the reflector problem, the marginals are not necessarily equal. 

In the multi-marginal case, logarithmic-type costs appear in Density Functional Theory (DFT), in the so-called \textit{strictly correlated limit} (SCE). In SCE-DFT, the multi-marginal optimal transport problem is interpreted as the equilibrium configuration of a distribution of $N$ charges in $(x_1,\dots,x_N) \in (\R^d)^N$ subject to the (minus) logarithmic electrostatic interaction depending on the distance between each two of the particles. Due to the indistinguishability of the particles, the charge density $\rho(x_i)$ is the same for all the particles $x_i, i=1,\dots,N$.

Although the interesting case in chemistry is when the system of $N$ electrons are in the physical space $X=\R^3$ subject to a Coulomb electronic-electronic interaction cost, in physics and mathematics $2$-body interactions other than the Coulombian one have been considered \cite{DMaGerNenGorSei, DMaGerNen, SeiGorSav, FMPCK, CorKarLanLee}, as well as the problem \eqref{MKO} in a lower space dimensions $X=\R^d, d=1,2$ \cite{CoDePDMa,Malet2012,ChenFriMendl, CoStra, LanDMaGerLeuGor}. In particular, when the particles are confined in the plane $\R^2$, the natural model of electrostatic potential between two charges $x_i$ and $x_j$ is given by the logarithmic interaction. We present in subsection  \ref{introwire} a pedagogical example of a charged wire, where the logarithmic electrostatic potential appears naturally. 

In the following, we give a brief overview on DFT-OT. For a complete presentation on the topic, we refer the reader to \cite{DMaGerNen} and the references therein.

\subsection{A brief review on the literature in DFT-OT}

The problem \eqref{MKO} when $X=\R^3$ and $c$ is the Coulomb cost $(f(\vert x-y\vert) = 1/\vert x-y\vert)$ was introduced in 1999 by M. Seidl \cite{Sei}. By using arguments from physics, Seidl suggested that, at least in the case when $\rho$ is radially symmetric, a minimizer $\gamma$ in \eqref{MKO} exists and is concentrated on the graph of a map $T:\R^3\to\R^3$, $T_{\sharp}\rho = \rho$, and its iterates, i.e.
\[
\gamma = (\operatorname{Id},T,T^{(2)},\dots,T^{(N-1)})_{\sharp}\rho,
\]
where $T^{(N)} = \operatorname{Id}$ and $T^{(i)}$ is the $i$-times composition of the map $T$ with itself. In particular, via the map $T$, the optimality condition in the Kantorovich formulation of \eqref{KMOT} with Coulomb cost reads
\begin{equation}\label{eq:gradkantmap}
\nabla u(x) = - \sum^N_{i=1} \dfrac{x -T^{(i)}(x)}{\vert x -T^{(i)}(x)\vert^3}.
\end{equation} 
As pointed out in \cite{Sei} (see also \cite{BuDePGor}), the constraint in \eqref{KMOT}, 
\[
\sum^{N}_{i=1}u(x_1) \leq \sum_{1 \leq i < j \leq N} \dfrac{1}{\vert x_i - x_j\vert},
\]
has a simple physical meaning: it is required that, at optimality, the allowed manifold of the full $3D$ configuration space is the minimum of the classical potential energy given by the Coulomb interaction. Also, the equation \eqref{eq:gradkantmap} means that 
if such an optimal map $T$ exists, the Kantorovich potential $u(x)$ must compensate the net force acting on the electron in $x$, resulting from the repulsion of the other $N -1$ electrons at positions $T^{(i)}(x)$ \cite{SeiGorSav}.

In Density Functional Theory (DFT), the problem \eqref{MKO} can be seen as a sort of a semi-classical limit (dilute limit of DFT) of the Hohenberg-Kohn functional\footnote{Also known as the Levy-Lieb functional.} \cite{HohKoh, Levy, Lieb83}. This was suggested in the physics literature by  
Gori-Giorgi, Seidl and Vignale \cite{GorSeiVig} and, proved rigorously in 2017 by Cotar, Friesecke and Kl\"uppelberg \cite{CFK,CFK17}. 

For the Coulomb cost in the $2$-marginal case $(N=2)$, the existence of a unique optimal transport plan  in \eqref{MKO} of type $\gamma = (\Id, T)_{\sharp}\rho$ $(N=2)$ was obtained, independently, by Cotar, Friesecke and Kl{\"u}ppelberg \cite{CFK} and by Buttazzo, De Pascale and Gori-Giorgi \cite{BuDePGor}. In the multi-marginal case $(N>2)$ on the real line $(d=1)$, Colombo, De Pascale and Di Marino \cite{CoDePDMa} proved the existence of optimal transport plans $\gamma = (\Id,T,\dots,T^{(N-1)})_{\sharp}\rho$ in \eqref{MKO} for Coulomb costs.  In \cite{DMaGerNenGorSei, DMaGerNen, SeiGorSav}, the repulsive harmonic cost$$ c_w(x_1,\dots,x_N) = -\sum_{1\leq i,j\leq N} \vert x_i - x_j\vert^2 $$
was studied: Friesecke \textit{et al} \cite{FMPCK} have shown the existence of optimal transport plans supported in $(N-1)d$-dimensional sets; in \cite{DMaGerNen} explicit examples of such higher dimensional optimal transport plans as well as an example of an optimal transport plan $\gamma$ concentrated on the graphs of $\Id, T,\dots, T^{(N-1)}$ for a nowhere continuous map $T\colon [0,1]^d\to [0,1]^d$ are presented.
In \cite{GerKauRajWeak}, we gave an example of a three-marginal harmonic repulsion case with absolutely continuous marginals in $\R^n$ for which 
there is a unique optimal transport plan which is not induced by a map.

\subsection{Logarithmic Eletrostatic potential: Charged wire}\label{introwire}
Consider a uniformly charged (infinitely thin) wire on the $z$-axis: 
\[
\mathcal{W}  := \lbrace \bold{x} = (x,y,z) \in \R^3 ~:~ \vert z \vert < \delta\rbrace, \qquad 0 < \delta \ll 1. 
\]
 Suppose that the wire has a charge density $\rho(\bold{x})$. The resulting electric field is defined by
\[
E(\bold{x}) = \dfrac{1}{4\pi \epsilon_0}\int_{\R^3}\dfrac{\bold{x}-s}{\vert \bold{x}-s\vert^3}\rho(s)\,\d s,
\]
where $\epsilon_0 > 0$ is a constant (permittivity of the free space). Due to \textit{Maxwell's first equation} (or \textit{Gauss' law} of eletrostatics) the scalar field $\rho\colon\R^3\to\R$ and the vector field $E(\bold{x})$ are related by
\[
\nabla\cdot E(\bold{x}) = \dfrac{1}{\epsilon_0}\rho(\bold{x}).
\]
We define the total amount of charge $Q_{\Omega}$ in a cylinder $\Omega = \Omega_{R,H} \subset \R^3$ of radius $R>0$ and height $H$, which has the wire as its axis of symmetry:
\be\label{introflux}
Q_{\Omega} = \int_{\Omega}\rho(s)\,\d s = \epsilon_0\int_{\Omega}\nabla\cdot E(x)\,\d x = \epsilon_0\oint_{\partial \Omega} E(a)\cdot \d a,
\ee
where the second equality is obtained using the Gauss' theorem. Due to symmetry, the magnitude $\vert E(\bold{x})\vert$ of the electric field depends only on the Euclidean distance $s = d(\bold{x},\bold{w}) = d(\bold{x},\mathcal{W})$ of a point $\bold{x}$ from the wire, $\vert E(\bold{x})\vert = E(s)$, i.e
$E(\bold{x}) = (E(s)\cos\theta,E(s)\sin\theta,0)$. Moreover, at each point $\bold{w}$ on the lateral surface of this cylinder, the vector $E(\bold{w})$ is normal to the surface and has everywhere the same magnitude $\vert E(\bold{w})\vert = E(R)$. 

Therefore, if $\rho(\bold{x}) = \overline{\rho} > 0$ is constant inside the cylinder, the flux integral and the total amount of charge in the cylinder $\Omega_{R,H}$ in \eqref{introflux} read 
\[
\dfrac{1}{\epsilon_0} \overline{\rho} H = (2\pi R)H\cdot E(R), \quad\text{and therefore,}\quad  E(R) = \dfrac{1}{2\pi\epsilon_0}\dfrac{1}{R}.
\]
Let us write $E(s) = 1/(2\pi\epsilon_0 s)$. Since $E(s) = - V'(s)$, the corresponding electrostatic potential $V(s)$ is of logarithmic from
\[
V(s) = -\dfrac{1}{2\pi\epsilon_0}\log\dfrac{s}{s_0}, \quad s_0 > 0. \bigskip
\]

\subsection{Kantorovich duality}

The duality \eqref{KMOT} and the existence of a maximizer in \eqref{KMOT} was shown by Kellerer \cite{Kel} in the case there exist $L_\rho^1(X)$-functions $h_1,\ldots,h_N$ and a constant $C$ such that 
\[C\le c(x_1,\ldots,x_N)\le h(x_1)+\cdots+h(x_N).\label{eq:conditions}\] 

More recently, De Pascale \cite{DeP} and Buttazzo, Champion and De Pascale  \cite{BuChaDeP} extended the duality theory 
for a class of  repulsive cost functions $c\colon\R^{dN}\to \R\cup\{+\infty\}$ which are bounded from below, allowing, for instance, the inclusion of the Coulomb ($s = 1$) and Riesz cost functions ($1 \leq s \leq d$)
\[
c(x_1,\dots,x_N) = \sum_{1\leq i<j\leq N} \dfrac{1}{\vert x_i - x_j\vert^{s}}.
\]

The main contribution of this paper is to extend the duality theory for logarithmic costs. Some of our proofs are based on arguments present in \cite{BuChaDeP}. One ingredient to tackle the problem of costs that are not bounded from below is to consider, for $R\in \, ]0,\infty[$, the truncated cost functions 
\be\label{eq:crtrunc}
c_R(x_1,\ldots, x_N):=\sum_{1\le i<j\le N}\max\{f(R),f(d(x_i,x_j))\},\text{ for all }(x_1,\ldots,x_N)\in X^N,
\ee
and related total cost $C_R$, and collection $\mathcal{F}_R$ of functions for the dual problem: 
\[
C_R(\gamma):=\int_{X^N}c_R(x_1,\ldots,x_N)\,\d \gamma(x_1,\ldots, x_N), \text{ for each }\gamma\in \Gamma(\rho),
\]
and
\[
\mathcal{F}_R:=\left\{u\in L^1_\rho(X)~\Big|~u(x_1)+\cdots +u(x_N)\le c_R(x_1,\ldots, x_N)\text{ for }\rho ^{\otimes (N)}\text{-a.e. }(x_1,\ldots, x_N)\right\}.
\]
In this paper, we will deal with the unbounded costs via the $\Gamma$-limit of their truncations.

\subsection{Organization of the paper}

This paper is divided as follows: in Section \ref{sec:gamma} we present the general setting and introduce briefly some properties of $\Gamma$-convergence. In Section \ref{sec:MKproblem}, we discuss the existence of a minimizer in \eqref{MKO} by assuming that the marginals $\rho$ satisfy, with respect to the function $f$ that appears in our cost $c$, a condition analogous to the common assumption of the marginal measures having finite second moments (see condition (B) in Section \ref{sec:MKproblem}). 

In Section \ref{sec:duality}, we extend the duality results of \cite{Kel, DeP, BuChaDeP} for a class of unbounded cost functions  (Theorem \ref{thm:dual}) and in Section \ref{sec:propKanto} we obtain regularity results of Kantorovich potentials (Theorem \ref{thm:ulip}) as well as continuity of the cost functional as a function of the marginal $\rho$. 

Finally, in Section \ref{sec:app} we give some applications of our results: we note the existence of optimal plans in \eqref{MKO}, for $\log$-type costs, which are concentrated on maps when $X=\R$, and we prove the existence of an optimal transport map for the logarithmic cost when $N=2$.

\section{Preliminaries}\label{sec:gamma}

\subsection{General assumptions}

Let $(X,d)$ a Polish space. We consider a Borel probability measure $\rho\in \mathcal{P}(X)$ having \textit{small concentration}, meaning 
\begin{equation}\label{hyp:rho}
\lim_{r\to 0}\sup_{x\in X}\rho (B(x,r))<\frac{1}{N(N-1)^2}.\tag{A}
\end{equation}

We denote by $(x_1,\ldots, x_N)$ points in $X^N$, so $x_i\in X$ for each $i$. 
If we do not otherwise specify,
each quantification with respect to $i$ or $i,j$ is from $1$ to $N$. 
For a fixed $N\ge 1$, we assume that the cost $c\colon X^N\to \R\cup\lbrace +\infty\rbrace$ is of the form
\begin{equation}\label{hyp:cost}
c(x_1,\ldots, x_N)=\sum_{1\le i<j\le N}f(d(x_i,x_j)),\quad \text{ for all } \, (x_1,\ldots,x_N)\in X^N,
\end{equation}
where $f\colon [0,\infty[\to\R\cup\{+\infty\}$ satisfies the following conditions

\begin{align*}\label{hyp:f}
&f|_{]0,\infty[}\text{ is continuous and decreasing, and}\tag{F1}\\
&\lim_{t\to 0+}f(t)=+\infty. \tag{F2}
\end{align*}

Let us denote for a fixed $R>0$, for all $t>0$
\begin{align*}
&f_R(t)=\begin{cases}
f(t)&\text{ if }t<R\\
f(R)&\text{ otherwise}\end{cases}~~~\text{ and }\\
&f_R^{-1}(t)=\inf\{s~|~f_R(s)=t\};\end{align*}
of course, if $f$ is not strictly decreasing, the inverse function $f^{-1}$ is not well defined, but still the \textit{left-inverse} of $f$ can be defined as above.

We denote the set of couplings or transport plans having $N$ marginals equal to $\rho$ by 
\[
 \Gamma(\rho)=\left\{\gamma\in \mathcal{P}(X^N)~\big|~\textrm{pr}^i_\sharp \gamma=\rho \text{ for all } i\right\},
 \]
where $\textrm{pr}^i$ is the projection on the $i$-th coordinate	
\[\textrm{pr}^i(x_1,\ldots,x_i,\ldots,x_N)=x_i, \quad \text{for all }(x_1,\ldots, x_i,\ldots ,x_N)\in X^N.\]
In addition, we set for each $\gamma\in\Gamma(\rho)$,
\[C(\gamma)=\int_{X^N}c(x_1,\ldots,x_N)\,\d \gamma(x_1,\ldots, x_N);\]
this is the transportation cost related to $\gamma$. 

We want to study the dual problem, so we set
\[
\mathcal{F}:=\left\{u\in L^1_\rho(X)~\Big|~u(x_1)+\cdots +u(x_N)\le c(x_1,\ldots, x_N)\text{ for }\rho ^{\otimes (N)}\text{-a.e. }(x_1,\ldots, x_N)\right\}
\]
and
\[
D\colon L^1_\rho(X)\to\R\cup \{-\infty,+\infty\},\quad D(u)=N\int_{X}u\,\d\rho\text{ for all }u\in L^1_\rho(X). 
\]

Here one should note that, in the definition of $\mathcal{F}$ and also in future considerations, we identify the elements of $L_\rho^1(X)$ with their representatives unless otherwise stated. That is why the constraint
\[u(x_1)+\cdots+u(x_N)\le c(x_1,\ldots,x_N)\]
is required to hold only for $\rho ^{\otimes (N)}$-almost-every $(x_1,\ldots, x_N)$. Also, we do not allow the representatives to get the value $+\infty$. This we may do without loss of generality, since $L^1$-functions are finite almost everywhere. 

We aim at showing that 
\be\min_{\gamma\in \Gamma(\rho)}C(\gamma)=\max_{u\in \mathcal{F}}D(u).\label{eq:duality}\ee

In order to guarantee the existence of a minimizer on the left-hand side of \eqref{eq:duality}, we also assume that there exist a point $o\in X$ and a radius $r_0>0$ such that
\[ \int_{X\setminus B(o,r_0)}f\left(2d(x,o)\right)\,\d\rho(x)> - \infty.\tag{B}\]

This is a similar assumption than requiring, in the case of quadratic cost, that the marginal measures have finite second moments. 

Notice that even when $X = \R^d$ the cost function $c$ in \eqref{hyp:cost} does not fall in the class of functions considered by Buttazzo, Champion and de Pascale \cite{BuChaDeP}, since it may not be bounded from below. However, by suitably truncating the cost $c$, the truncated functions $c_R$ are bounded from below for each $R$ and, modulo translation, fall into the category of functions considered in \cite{BuChaDeP}.

\subsection{$\Gamma$-convergence}
We briefly outline the relevant definitions and properties of $\Gamma$ and $\Gamma^+$-convergences. The former is a type of convergence of functionals adjusted to minimal value problems and the latter to maximal value problems.
For a thorough presentation of $\Gamma$-convergence, we refer the reader to Braides' book \cite{braidesbookgamma}.

\begin{definition}[$\Gamma$-convergence and $\Gamma^+$-convergence]
Let $(S,d)$ be a metric space. We say that a sequence $(F_n)_{n\in \N}$ of functions $F_n \colon S\to\overline\R$ $\Gamma$-converges to a function $F\colon S\to\R\cup \{-\infty,+\infty\}$ and denote $F_n\overset{\Gamma}{\to}F$ if for all $y\in S$ the following two conditions hold: 
\begin{align*}
&\text{For all sequences }(y_n)_{n\in \N}\text{ that converge to }y\text{ we have}\\
&\liminf_n F_n(y_n) \ge F(y),\text{ and}\tag{I}\\
&\text{there exists a sequence }(y_n)_{n\in \N}\text{ converging to }y\text{ such that}\\
&\limsup_n F_n(y_n)\le F(y).\tag{II}\end{align*}

Correspondingly, we say that a sequence $(D_n)_{n\in \N}$ of functions $D_n\colon S\to\overline\R$, $\Gamma^+$-convergence to a function $D\colon S\to\R\cup \{-\infty,+\infty\}$ and denote $D_n\overset{\Gamma^+}{\to}D$ if for all $u\in S$ the following two conditions hold: 
\begin{align*}
&\text{For any sequence }(u_n)_{n\in \N}\text{ converging to }u\text{ we have}\\
&\limsup_n D_n(u_n)\le D(u),\text{ and}\tag{I+}\\
&\text{there exists a sequence }(u_n)_{n\in \N}\text{ converging to }u\text{ such that }\\
&\limsup_n D_n(u_n)\le D(u).\tag{II+}\end{align*}
\end{definition}

In order to be able to take advantage of these notions, the underlying space $S$ must satisfy some compactness properties with respect to the minima/maxima of the functionals of interest. The following definition takes care of this. 
\begin{definition}
Let $(S,d)$ be a metric space. 
We say that a sequence $(F_n)_{n\in \N}$ of functions $F_n \colon S\to\R\cup \{-\infty,+\infty\}$ is equi-mildly coercive on $S$ if there exists a compact and non-empty subset $K$ of $S$ such that for all $n\in\N$ we have
\[\inf_{y\in S}F_n(x)=\inf_{y\in K} F_n(y).\]

Analogously, we say that a sequence $(D_n)_{n\in \N}$ of functions $D_n\colon S\to\R\cup \{-\infty,+\infty\}$ is equi-mildly ${}^+$-coercive on $S$ if there exists a compact and non-empty subset $K$ of $S$ such that for all $n\in\N$ we have
\[\sup_{u\in S}D_n(u)=\sup_{u\in K} D_n(u).\]
\end{definition}

\begin{theorem}{\cite[Theorem 1.21]{braidesbookgamma}}
\label{thm:existence}Let $(S,d)$ be a metric space. 
Let $(F_n)_{n\in \N}$ be an equi-mildly coercive sequence of functions $F_n \colon S\to \R\cup \{-\infty,+\infty\}$ that $\Gamma$-converges to some function $F\colon S\to\R\cup \{-\infty,+\infty\}$. Then there exists a minimum $y\in S$ of $F$ and the sequence $(\inf_{y\in S}F_n(y))_{n\in\N}$ converges to $\min_{y\in S}F(y)$. In addition, if $(y_n)_{n\in \N}$ is a sequence of elements of $S$ such that 
\[\lim_n F_n(y_n)=\lim_n \inf_{y\in S} F_n(y),\]
then every limit of a subsequence of $(y_n)_{n\in \N}$ is a minimizer of $F$. 

Similarly, let $(D_n)_{n\in \N}$ be an equi-mildly  $\Gamma^+$-coercive sequence of functions $D_n\colon S\to \R\cup \{-\infty,+\infty\}$ that $\Gamma^+$-converges to some function $D\colon S\to\R\cup \{-\infty,+\infty\}$. Then there exists a maximum $u\in S$ of $D$ and the sequence $(\sup_{u\in S}D_n(u))_n$ converges to $\max_{u\in S}D(u)$. In addition, if $(u_n)_{n\in \N}$ is a sequence of elements of $S$ such that 
\[\lim_n D_n(u_n)=\lim_n \sup_{u\in S} D_n(u),\]
then every limit of a subsequence of $(u_n)_{n\in \N}$ is a maximizer of $D$.

\end{theorem}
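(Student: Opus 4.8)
The plan is to prove the statement for $\Gamma$-convergence and then obtain the $\Gamma^{+}$ version by passing to negatives. For the $\Gamma$-convergence part, I would first fix a nonempty compact set $K\subseteq S$ witnessing equi-mild coercivity, so that $\inf_{S}F_n=\inf_{K}F_n$ for every $n\in\N$. The easy half is the upper bound $\limsup_n\inf_S F_n\le\inf_S F$: for an arbitrary $z\in S$, property (II) provides a recovery sequence $z_n\to z$ with $\limsup_n F_n(z_n)\le F(z)$, and since $\inf_S F_n\le F_n(z_n)$ we get $\limsup_n\inf_S F_n\le F(z)$; taking the infimum over $z\in S$ closes this half.

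Next I would produce a candidate minimizer for the matching lower bound. Set $m:=\liminf_n\inf_S F_n$ and pass to a subsequence along which $\inf_S F_{n_k}\to m$. For each $k$ choose $y_{n_k}\in K$ with $F_{n_k}(y_{n_k})$ within $1/k$ of $\inf_S F_{n_k}$ (interpreted appropriately if this infimum is infinite); by compactness of $K$, a further subsequence satisfies $y_{n_k}\to y\in K$. Applying the liminf inequality (I) — after completing $(y_{n_k})$ to a full sequence by inserting the value $y$ at the missing indices, which still converges to $y$, and using that the liminf over a sequence is at most the liminf over any subsequence — yields $F(y)\le\liminf_k F_{n_k}(y_{n_k})\le m$. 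Chaining this with the easy half gives $\inf_S F\le F(y)\le m=\liminf_n\inf_S F_n\le\limsup_n\inf_S F_n\le\inf_S F$, so all these quantities coincide. Hence $F(y)=\inf_S F$: the infimum is attained, so a minimum exists, and $\inf_S F_n\to\min_S F$.

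For the final assertion, given a sequence $(y_n)$ with $\lim_n F_n(y_n)=\lim_n\inf_S F_n=\min_S F$ and any subsequence with $y_{n_k}\to\bar y$, the same completion-plus-(I) argument gives $F(\bar y)\le\liminf_k F_{n_k}(y_{n_k})=\min_S F$, so $\bar y$ is a minimizer. The $\Gamma^{+}$ statement then follows by applying everything above to $(-D_n)$ and $-D$: unwinding the definitions, $D_n\overset{\Gamma^{+}}{\to}D$ is exactly $-D_n\overset{\Gamma}{\to}-D$, equi-mild ${}^{+}$-coercivity of $(D_n)$ is equi-mild coercivity of $(-D_n)$, and $\sup_S D_n=-\inf_S(-D_n)$, $\max_S D=-\min_S(-D)$.

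The only point requiring care is the bookkeeping around the liminf inequality: (I) is phrased for full sequences converging to the chosen point, while the approximate minimizers I can exploit form only a subsequence contained in $K$. Completing that subsequence by the limiting constant reduces the matter to (I) as stated; and the degenerate cases where the infima equal $\pm\infty$ need no separate treatment, since $F$ is allowed to take infinite values, and a minimizer in that case is simply a point attaining the (possibly infinite) infimum.
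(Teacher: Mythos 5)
Your proof is correct and essentially reproduces the standard argument for the fundamental theorem of $\Gamma$-convergence; the paper does not prove this statement itself but cites it directly as \cite[Theorem 1.21]{braidesbookgamma}, and your argument is the one found there, down to the subsequence-completion device used to apply the liminf inequality (I) to the convergent subsequence of near-minimizers taken from the compact set $K$. One point worth making explicit: your reduction of the $\Gamma^{+}$ part to the $\Gamma$ part via $D_n\mapsto -D_n$ relies on the \emph{corrected} form of condition (II+), namely ``there exists $u_n\to u$ such that $\liminf_n D_n(u_n)\ge D(u)$.'' As printed, the paper's (II+) repeats the $\limsup_n D_n(u_n)\le D(u)$ inequality of (I+), which is evidently a typo: that condition is vacuous (the constant sequence always satisfies it), and under the literal reading the $\Gamma^{+}$ half of the theorem would in fact be false --- for instance $D_n\equiv 0$ on $S=[0,1]$ and $D$ equal to $1$ at $x=1/2$ and $0$ elsewhere satisfy the printed (I+), (II+) and equi-mild ${}^{+}$-coercivity with $K=S$, yet $\sup_S D_n=0\ne 1=\max_S D$. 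You have tacitly used the standard definition, which is what makes the equivalence between $D_n\overset{\Gamma^{+}}{\to}D$ and $-D_n\overset{\Gamma}{\to}-D$ hold, and that is the right thing to do.
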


\section{Monge-Kantorovich problem}
\label{sec:MKproblem}

First, we prove the existence of a minimizer for the Monge-Kantorovich problem \eqref{MKO} in our framework.
Notice that the conditions (A) and (B) guarantee that the cost has a finite value.

\begin{proposition}\label{prop:existsminimizer}
Let $(X,d)$ be a Polish space. Suppose that $\rho\in \mathcal{P}(X)$ satisfies $(A)$ and $(B)$, and $c\colon X^N\to\R\cup\lbrace +\infty\rbrace$ is a cost function 
\[
c(x_1,\ldots, x_N)=\sum_{1\le i<j\le N}f(d(x_i,x_j)),\quad \text{ for all } \, (x_1,\ldots,x_N)\in X^N,
\]
where $f\colon [0,\infty[\to \R$ satisfies $(F1)$ and $(F2)$. Then, the following minimum is achieved
\[
\min_{\gamma\in \Gamma(\rho)}\int_{X^N}c(x_1,\dots,x_N)\,\d\gamma(x_1,\dots,x_N).
\]
\end{proposition}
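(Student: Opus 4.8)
The plan is to prove existence by the direct method: I will show that $\Gamma(\rho)$ is weakly compact, that $C$ is weakly lower semicontinuous on $\Gamma(\rho)$, and that $\inf_{\Gamma(\rho)}C<+\infty$, after which any weak limit of a minimizing sequence is a minimizer. (This can also be read as an instance of Theorem \ref{thm:existence} applied to the constant sequence $C_n\equiv C$, since $\Gamma(\rho)$ is a compact metric space and $C_n\overset{\Gamma}{\to}C$ is equivalent to $C$ being lsc.) Weak compactness of $\Gamma(\rho)$ is standard and uses neither (A) nor (B): $\rho$ is tight because $X$ is Polish, hence $\Gamma(\rho)$ is tight — given $\varepsilon>0$ take $K\subseteq X$ compact with $\rho(X\setminus K)<\varepsilon/N$, so that $\gamma(X^N\setminus K^N)\le\sum_i\rho(X\setminus K)<\varepsilon$ for every $\gamma\in\Gamma(\rho)$, with $K^N$ compact — and $\Gamma(\rho)$ is weakly closed because each $\gamma\mapsto\textrm{pr}^i_\sharp\gamma$ is weakly continuous; Prokhorov's theorem then gives weak compactness.

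For lower semicontinuity the key is that $c$ is lower semicontinuous on $X^N$ ($f$ is continuous on $]0,\infty[$ with $\lim_{t\to0^+}f(t)=+\infty$, hence lsc on $[0,\infty[$, and $d$ is continuous), but since $c$ is not bounded below I need hypothesis (B) to produce a one-variable minorant. Set $h(x):=f\!\left(2\max\{d(x,o),r_0\}\right)$, with $o,r_0$ from (B). Then $h$ is continuous, bounded above by $f(2r_0)$, and $\int_X h\,\d\rho$ is finite: on $\{d(\cdot,o)\ge r_0\}$ one has $h(x)=f(2d(x,o))$, so the integral there is $>-\infty$ by (B), while the complementary set contributes the constant $f(2r_0)$; hence $h\in L^1_\rho(X)$. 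Using the triangle inequality, the bound $d(x_i,o)+d(x_j,o)\le 2\max\{\max\{d(x_i,o),r_0\},\max\{d(x_j,o),r_0\}\}$, and the monotonicity of $f$, a short check gives $f(d(x_i,x_j))\ge h(x_i)+h(x_j)-f(2r_0)$ for all $i\ne j$, whence
\[
c(x_1,\dots,x_N)\ \ge\ \psi(x_1,\dots,x_N):=(N-1)\sum_{i=1}^N h(x_i)-\binom{N}{2}f(2r_0).
\]
Since $c-\psi\ge 0$ is lower semicontinuous, $\gamma\mapsto\int_{X^N}(c-\psi)\,\d\gamma$ is weakly lsc (a nonnegative lsc function on a metric space is an increasing supremum of bounded Lipschitz functions, and a supremum of weakly continuous functionals is weakly lsc), while $\int_{X^N}\psi\,\d\gamma=N(N-1)\int_X h\,\d\rho-\binom{N}{2}f(2r_0)$ is a finite constant on $\Gamma(\rho)$; therefore $C$ is weakly lsc on $\Gamma(\rho)$ and bounded below.

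It remains to check $\inf_{\Gamma(\rho)}C<+\infty$, which is exactly where the small-concentration hypothesis (A) enters: one constructs a competitor $\bar\gamma\in\Gamma(\rho)$ supported away from near-collisions, i.e. with $d(x_i,x_j)\ge\varepsilon$ for all $i\ne j$ on $\spt\bar\gamma$ and some $\varepsilon>0$, so that $c\le\binom{N}{2}f(\varepsilon)<+\infty$ on $\spt\bar\gamma$ and hence $C(\bar\gamma)<+\infty$. Such a $\bar\gamma$ is obtained by restricting to a compact set of almost full $\rho$-measure, partitioning it into finitely many Borel pieces of small diameter and $\rho$-measure below $\tfrac{1}{N(N-1)^2}$ — which (A) permits at a suitable scale — and redistributing the mass among the $N$ coordinates so that pairs stay separated, in the spirit of \cite{BuChaDeP,DeP}; I expect this combinatorial construction, for which the precise constant in (A) is designed, to be the main obstacle. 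Granting it, take a minimizing sequence $(\gamma_n)_n\subset\Gamma(\rho)$; by weak compactness a subsequence $\gamma_{n_k}$ converges weakly to some $\gamma\in\Gamma(\rho)$, and weak lower semicontinuity yields $C(\gamma)\le\liminf_k C(\gamma_{n_k})=\inf_{\Gamma(\rho)}C$, so $\gamma$ attains the minimum.
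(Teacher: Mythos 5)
Your proof is correct and follows the same overall strategy as the paper: weak compactness of $\Gamma(\rho)$ plus weak lower semicontinuity of $C$, where the lower semicontinuity is reduced to a one-variable minorant built from hypothesis (B). A few remarks on the differences. Where the paper invokes \cite[Theorem 4.3]{Vil03} with an upper semicontinuous minorant, you use the equivalent and equally standard Lipschitz-approximation argument for nonnegative lsc integrands; both are fine. More substantively, your minorant
\[
\psi(x_1,\dots,x_N)=(N-1)\sum_{i=1}^N h(x_i)-\binom{N}{2}f(2r_0),\qquad h(x)=f\bigl(2\max\{d(x,o),r_0\}\bigr),
\]
correctly retains the additive constant $-\binom{N}{2}f(2r_0)$. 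This is actually more careful than the paper's displayed chain: the paper defines $h_{\mathrm{paper}}=\tfrac12\sum_{i<j}\bigl(g(2d(x_i,o))+g(2d(x_j,o))\bigr)$ and, after correctly arriving at $c\ge\sum_{i<j}\min\{g(2d(x_i,o)),g(2d(x_j,o))\}=\tfrac12\sum_{i<j}(a_{ij}+b_{ij}-|a_{ij}-b_{ij}|)$, asserts this is $\ge\tfrac12\sum_{i<j}(a_{ij}+b_{ij})$, which reverses the inequality $-|a-b|\le 0$; the claim $c\ge h_{\mathrm{paper}}$ fails in general (e.g.\ $f=-\log$, $N=2$, $x_1=o$, $d(x_2,o)$ large). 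The fix is exactly your subtraction of the bounded term $\max\{h(x_i),h(x_j)\}\le f(2r_0)$, so your version closes that gap.

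One point where you overreach: you flag, and leave incomplete, a verification that $\inf_{\Gamma(\rho)}C<+\infty$ via a combinatorial construction exploiting (A). This step is not needed for the proposition as stated. Once $C$ is weakly lower semicontinuous and bounded below on the compact set $\Gamma(\rho)$, the minimum is attained regardless of whether its value is finite; indeed the paper's own proof of this proposition never uses (A). Finiteness of the minimum (and the role of the precise constant $1/(N(N-1)^2)$) only becomes relevant later, in Theorem~\ref{thm:offdiagonal} and its consequences, and you are right that it is nontrivial — but it is a separate issue from existence. If you delete that paragraph and simply conclude from lower semicontinuity and compactness, your argument is complete and self-contained.
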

\begin{proof}
 The proof follows standard arguments. From \cite{Kel} we know that $\Gamma(\rho)$ is compact. 
 Therefore, it suffices to prove the lower semicontinuity of the cost $C(\gamma)$.
 For this, it suffices (see \cite[Theorem 4.3]{Vil03}) to find an upper semicontinuous function $h$ such that
 \begin{align}
&h\in L^1_\gamma(X^{N})\text{ for all }\gamma \in \Gamma(\rho),\label{eq:integrable}\\
&c\ge h \text{, and }\label{eq:lowerbound}\\
&\int_{X^{N}}h\,\d\gamma'=\int_{X^{N}}h\,\d\gamma\, \text{ for all }\gamma,\gamma' \in \Gamma(\rho).\label{eq:integralsconverge}
\end{align}
Let us define $g\colon [0,\infty[\to\R$ by
\[g(r)=\begin{cases}
f(r_0)&\text{ if }r<r_0\\
f(r)&\text{ if }r\ge r_0\end{cases},\]
and set $h\colon X^{N}\to\R$
\[h(x_1,\ldots, x_N)=\frac 12\sum_{1\le i<j\le N}(g(2d(x_i,o))+g(2d(x_j,o))).\]
As a finite sum of continuous functions, $h$ is continuous and thus trivially upper semicontinuous. In addition, for any $\gamma\in \Gamma(\rho)$ we have
\begin{align*}
\int_{X^N}h\,\d\gamma&=\frac 12\sum_{1\le i<j \le N}\int_{X^N}(g(2d(x_i,o))+g(2d(x_j,o))\,\d\gamma \\
&=\frac 12N(N-1)\int_{X}g(2d(x_i,o))\,\d\rho(x)\nonumber\\
&=\frac 12N(N-1)\left(\int_{B(o,r_0)}f(2\cdot 2r_0)\,\d\rho(x)+\int_{X\setminus B(o,r_0)}f(2d(x,o))\,\d\rho(x)\right).
\end{align*}
Therefore, due to Assumption (B) condition \eqref{eq:integrable} holds. Similarly, condition \eqref{eq:integralsconverge} follows by
\[\int_{X^N}h\,\d\gamma'=\frac12 \sum_{1\le i<j\le N}\int_{X}(g(2d(x_i,o))+g(2d(x_j,o)))\,\d\rho=\int_{X^N}h\,\d\gamma.\]
Finally, to prove condition \eqref{eq:lowerbound}, we fix $(x_1,\ldots, x_N)\in X^N$ and by (F1) we have that
\begin{align*}
c(x_1,\ldots, x_N)&=\sum_{1\le i<j\le N}f(d(x_i,x_j))\ge\sum_{1\le i<j\le N}g(d(x_i,x_j))\nonumber\\
&\ge\sum_{1\le i<j\le N}g(d(x_i,o)+d(x_j,o))\nonumber\\
&\ge\sum_{1\le i<j\le N}g(2\max\{d(x_i,o),d(x_j,o)\})\nonumber\\
&=\sum_{1\le i<j\le N}\min\{g(2d(x_i,o)),g(2d(x_j,o))\}\nonumber\\
&=\frac 12\sum_{1\le i<j\le N}(g(2d(x_i,o))+g(2d(x_j,o))-|g(2d(x_i,o))-g(2d(x_j,o))|)\nonumber\\
&\ge\frac 12\sum_{1\le i<j\le N}(g(2d(x_i,o))+g(2d(x_j,o))-0)=h(x_1,\ldots, x_N).
\end{align*}
This concludes the proof.
\end{proof}

For $\alpha > 0$ we define the set $D_{\alpha}$ as  
\[
D_\alpha:=\left\{(x_1,\ldots, x_N)\in X^N~|~\text{there exist }i,j\text{ such that }d(x_i,x_j)<\alpha\right\}.
\]

The next theorem states that for any measure $\rho$ there exists $\overline{\alpha}>0$ for which the support of any optimal plan is concentrated away from the set $D_{\overline{\alpha}}$. 

\begin{theorem}\label{thm:offdiagonal}
Let $(X,d)$, $\rho$, $f$, $c$ as in the Proposition \ref{prop:existsminimizer} and let $\gamma$ be a minimizer of
\[
C(\rho) =  \min_{\gamma\in\Gamma(\rho)}\int_{X^N} c(x_1,\dots,x_N)\,\d\gamma(x_1,\dots,x_N).
\]
Let us fix $0<\beta<1$ such that
\[\sup_{x\in X}\rho (B(x,\beta)) <\frac{1}{N(N-1)^2}.\]
Then, we have for all 
\be\label{eq:alphaneedstobe}
\alpha<f^{-1}\left(\frac{N^2(N-1)}{2}f(\beta)\right)\ee
the inclusion
\be\label{eq:claim}
\spt (\gamma)\subset X^N\setminus D_\alpha.\ee
\end{theorem}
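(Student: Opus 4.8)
The plan is to argue by contradiction: suppose $\gamma$ is optimal but $\gamma(D_\alpha) > 0$ for some $\alpha$ satisfying \eqref{eq:alphaneedstobe}. The strategy is to modify $\gamma$ on the ``bad'' set where two coordinates are too close, spreading mass apart a little, and show this strictly decreases the cost, contradicting optimality. Since $f$ is decreasing, pushing points apart lowers the pairwise terms $f(d(x_i,x_j))$; the only worry is that rearranging might accidentally bring \emph{other} pairs closer together and increase their cost. The smallness-of-concentration hypothesis on $\rho$ is exactly what rules this out quantitatively.

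First I would fix some $\beta$ as in the statement and suppose for contradiction that $\gamma(D_\alpha)>0$ with $\alpha < f^{-1}\bigl(\tfrac{N^2(N-1)}{2}f(\beta)\bigr)$. The key is a covering/exchange construction: using that $\sup_{x}\rho(B(x,\beta))$ is strictly less than $\frac{1}{N(N-1)^2}$, one can find a point $\bar x$ and produce ``room'' — a set of positive $\rho$-measure consisting of points that are $\beta$-far from the relevant centers — into which one can transport the offending mass. Concretely, on the portion of $\gamma$ living in $D_\alpha$, pick an index pair $(i,j)$ realizing $d(x_i,x_j)<\alpha$; I would build a competitor plan $\tilde\gamma$ that keeps the marginals equal to $\rho$ (this is the delicate bookkeeping part — one typically glues together a ``good'' piece of $\gamma$ with a cyclically-permuted/relabeled piece so that all $N$ projections remain $\rho$) but on the modified piece has all pairwise distances at least $\beta$. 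This is the standard competitor-construction technique from \cite{BuChaDeP,CoDePDMa} adapted to the metric, multi-marginal setting.

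Next comes the cost comparison. On the modified piece, every pairwise term is at least $f(\beta)$ in the \emph{new} plan, and there are at most $\binom{N}{2} \le \frac{N(N-1)}{2}$ such terms, so the new cost contribution is bounded below by $-\tfrac{N(N-1)}{2}|f(\beta)|$-type quantities — more precisely one controls the increase of all pairs by $\frac{N(N-1)}{2}\cdot$ (something comparable to $f(\beta)$), while the term $f(d(x_i,x_j))$ in the \emph{old} plan was at least $f(\alpha)$, which by the choice of $\alpha$ exceeds $\tfrac{N^2(N-1)}{2}f(\beta)$. The factor $N^2(N-1)/2$ versus $N(N-1)/2$ leaves a genuine gap: the single very-large term $f(d(x_i,x_j)) \ge f(\alpha) > \frac{N^2(N-1)}{2}f(\beta)$ that we destroy more than pays for the at-most-$\frac{N(N-1)}{2}$ pairwise penalties (each $\lesssim f(\beta)$, and there are also the $N$ reshufflings of mass to account for, hence the extra factor of $N$) that we might create. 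Hence $C(\tilde\gamma) < C(\gamma)$, contradicting minimality, and therefore $\spt(\gamma)\subset X^N\setminus D_\alpha$.

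The main obstacle I expect is the competitor construction: producing $\tilde\gamma$ that (a) strictly improves the cost on the bad set and (b) \emph{exactly} preserves all $N$ marginals. Preserving the marginals while only locally perturbing a multi-marginal plan requires a careful ``cut-and-glue'' — one usually disintegrates $\gamma$ with respect to the bad/good decomposition, transports the bad mass via a measurable selection into the ``free room'' guaranteed by hypothesis (A), and symmetrizes over the $N!$ permutations (or a suitable cyclic subset) to restore the marginal condition. Verifying measurability and that the total mass moved is small enough that the spare room genuinely exists — this is where hypothesis (A) with its precise constant $\frac{1}{N(N-1)^2}$ is consumed — is the technical heart of the argument; the cost estimate itself, given the construction, is then just the arithmetic comparison sketched above.
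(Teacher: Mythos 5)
Your proposal follows essentially the same route as the paper: the paper's entire proof consists of deferring to \cite{BuChaDeP}, noting that the contradiction-plus-competitor argument there (which spreads near-diagonal mass apart using the room guaranteed by the small-concentration hypothesis (A)) carries over verbatim because it only probes the cost near the singularity and not in the tail, and that is precisely the strategy you sketch, including the correct identification of where the constants $\frac{1}{N(N-1)^2}$ and $\frac{N^2(N-1)}{2}f(\beta)$ enter. One small sign slip worth flagging: after rerouting, each new pairwise term satisfies $f(d(x_i,x_j))\le f(\beta)$ (not ``at least''), since the new distances are $\ge\beta$ and $f$ is decreasing; this is the direction that makes the comparison against the destroyed term $f(\alpha)>\frac{N^2(N-1)}{2}f(\beta)$ close, and it is clearly what you intend in the following clause, so it does not affect the soundness of the plan.
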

\begin{proof}
The proof presented in \cite{BuChaDeP} also works here. The fact that optimal plans stay out of the diagonal reflect the properties of the cost close to the singularity, not to the tail. 
\end{proof}

We recall that for all $R>0$, the truncated costs $c_R$ and $C_R$
\[c_R(x_1,\ldots, x_N)=\sum_{1\le i<j\le N}\max\{f(R),f(d(x_i,x_j))\}\text{ for all }(x_1,\ldots,x_N)\in X^N,\]
\[
C_R(\gamma)=\int_{X^N}c_R(x_1,\ldots,x_N)\,\d \gamma(x_1,\ldots, x_N)\text{ for each }\gamma\in \Gamma(\rho).
\]
Using these we define the functionals $K_R,K\colon\mathcal{P}(X^N)\to\R\cup \{+\infty\}$, 
\[K_R(\gamma):=\begin{cases}C_R(\gamma)&\text{ if }\gamma\in \Gamma(\rho)\\
+\infty&\text{ otherwise}\end{cases},\]
\[K(\gamma):=\begin{cases}C(\gamma)&\text{ if }\gamma\in \Gamma(\rho)\\
+\infty&\text{ otherwise}\end{cases}. \]

An approximation result of convergence of minimizers of the truncated costs $(K_R)_{R\in\N}$ is given by the following proposition.

\begin{proposition}\label{prop:gamma}The sequence of functionals $(K_R)_{R\in \N}$ is equicoercive and $\Gamma$-converges to $K$ with respect to the weak convergence of measures. 
\end{proposition}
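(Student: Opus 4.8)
The plan is to check, with respect to the weak convergence on $\mathcal{P}(X^N)$, the three items entering Theorem~\ref{thm:existence}: equicoercivity of the sequence $(K_R)_R$, the $\Gamma$-liminf inequality (I), and the existence of a recovery sequence (II). For equicoercivity I would simply take the compact set to be $\Gamma(\rho)$ itself: by Kellerer's theorem (already used in Proposition~\ref{prop:existsminimizer}) it is a non-empty weakly compact subset of $\mathcal{P}(X^N)$, and since $K_R\equiv+\infty$ on $\mathcal{P}(X^N)\setminus\Gamma(\rho)$ by definition, we have $\inf_{\mathcal{P}(X^N)}K_R=\inf_{\Gamma(\rho)}K_R$ for every $R$, which is precisely equi-mild coercivity.

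For the $\Gamma$-liminf inequality, let $\gamma_R\rightharpoonup\gamma$. Since deleting the indices $R$ with $\gamma_R\notin\Gamma(\rho)$ does not lower $\liminf_R K_R(\gamma_R)$, and since $\Gamma(\rho)$ is weakly closed (so $\gamma\notin\Gamma(\rho)$ forces $K_R(\gamma_R)=+\infty$ for all large $R$, making the inequality trivial), we may assume $\gamma_R\in\Gamma(\rho)$ for all $R$ and $\gamma\in\Gamma(\rho)$. Let $h$ be the continuous function from the proof of Proposition~\ref{prop:existsminimizer} and $H=\int_{X^N} h\,\d\gamma'$, a value independent of $\gamma'\in\Gamma(\rho)$, so that $c\ge h$, $h\in L^1_{\gamma}(X^N)$, and (by (F1)--(F2), setting $f(0)=+\infty$) $c$ is lower semicontinuous. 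Since $c_R\ge c$ for every $R$, the function $c-h$ is lower semicontinuous and non-negative, and
\[
K_R(\gamma_R)=\int_{X^N}(c_R-h)\,\d\gamma_R+H\ \ge\ \int_{X^N}(c-h)\,\d\gamma_R+H .
\]
Applying the standard lower-semicontinuity of integrals of non-negative lower semicontinuous functions under weak convergence, $\liminf_R\int_{X^N}(c-h)\,\d\gamma_R\ge\int_{X^N}(c-h)\,\d\gamma$, hence $\liminf_R K_R(\gamma_R)\ge\int_{X^N}(c-h)\,\d\gamma+H=C(\gamma)=K(\gamma)$, the inequality reading $+\infty\ge+\infty$ when $C(\gamma)=+\infty$.

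For the recovery sequence I would take the constant sequence $\gamma_R\equiv\gamma$. If $\gamma\notin\Gamma(\rho)$ then $K(\gamma)=+\infty$ and there is nothing to check, so let $\gamma\in\Gamma(\rho)$. From the pointwise identity $c_R=c+\sum_{1\le i<j\le N}\bigl(f(R)-f(d(x_i,x_j))\bigr)^{+}$ one sees that $(c_R)_R$ decreases pointwise to $c$ and $0\le c_R-c\le c_1-c$ for all $R\ge1$. If $C(\gamma)=+\infty$ then $C_R(\gamma)\ge C(\gamma)=+\infty$ for all $R$ and the claim is trivial; otherwise, using $c\ge h$ together with Assumption~(B) one checks $c_1-c\in L^1_{\gamma}(X^N)$, and dominated convergence then gives $K_R(\gamma)=C_R(\gamma)\to C(\gamma)=K(\gamma)$, in particular $\limsup_R K_R(\gamma)\le K(\gamma)$. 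This settles (II), and together with equicoercivity and (I) it proves the proposition.

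The only genuinely delicate point is the integrability bookkeeping forced by $c$ being unbounded from below: one must keep the auxiliary function $h$ in play so that the weak-convergence lower-semicontinuity estimate is legitimately applied to the \emph{non-negative} lower semicontinuous function $c-h$ (rather than to $c$, which has no finite lower bound), and one must deduce $c_1-c\in L^1_{\gamma}(X^N)$ from Assumption~(B) along the same lines by which the bound $c\ge h$ is obtained in Proposition~\ref{prop:existsminimizer}. Everything else — compactness and weak closedness of $\Gamma(\rho)$, the elementary pointwise formula for $c_R$, and the reductions to the cases $\gamma\notin\Gamma(\rho)$ or $C(\gamma)=+\infty$ — is routine.
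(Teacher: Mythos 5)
Your proof is correct and follows essentially the same route as the paper: equicoercivity from the weak compactness of $\Gamma(\rho)$, the liminf inequality from the monotonicity $c_R\ge c$ combined with lower semicontinuity of $C$ on $\Gamma(\rho)$ (which you re-derive explicitly via the nonnegative lower semicontinuous integrand $c-h$ from Proposition \ref{prop:existsminimizer}), and the constant recovery sequence. The only difference is that you justify, via dominated convergence with the majorant $c_1-c\in L^1_\gamma(X^N)$ obtained from assumption (B), why $C_R(\gamma)\to C(\gamma)$ along the constant sequence -- a step the paper asserts without comment, and which does require such an argument since $c_R\ge c$.
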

\begin{proof}
First we notice that the equicoerciviness of $(K_R)_{R\in \N}$ follows from the fact that $\Gamma(\rho)$ is weakly compact \cite{Kel}.
We then fix $\gamma\in \mathcal{P}(X^N)$ and show that
\begin{align}
&\text{for all sequences }(\gamma_R)_{R\in\N}\text{ such that }\gamma_R\rightharpoonup \gamma\text{ we have}\nonumber\\
&\liminf_{R\to\infty} K_R(\gamma_R)\ge K(\gamma)\text{, and }\label{eq:inferior}\\
&\text{there exists a sequence }(\gamma_R)_{R\in\N}\text{ such that }\gamma_R\rightharpoonup\gamma\text{ and }\nonumber\\
&\limsup_{R\to\infty} K_R(\gamma_R)\le K(\gamma).\label{eq:superior}\end{align}

Fix a sequence $(\gamma_R)_{R\in\N}$ in $\mathcal{P}(X^N)$ such that $\gamma_R\rightharpoonup  \gamma$. 
By going to a subsequence we may assume that $\liminf_{R\to\infty} K_R(\gamma_R) = \lim_{R\to\infty}  K_R(\gamma_R)$.
Thus, we may also suppose that $K_R(\gamma_R)<\infty$ for all $R \in \N$, since otherwise \eqref{eq:inferior} would trivially hold.
Consequently, we have that $\gamma_R \in \Gamma(\rho)$ for all $R \in \N$ and thus also $\gamma \in \Gamma(\rho)$ by compactness of $\Gamma(\rho)$, see \cite{Kel}.
Now, by monotonicity of the integral and lower semi-continuity of $K(\gamma)$ we get
\[
\liminf_{R\to\infty} K_R(\gamma_R) \ge \liminf_{R\to\infty} K(\gamma_R)\ge K(\gamma),
\]
so \eqref{eq:inferior} is satisfied. Finally, the condition \eqref{eq:superior} is satisfied by the constant sequence $\gamma_R = \gamma$ for all $R \in \N$. 
\end{proof}

\subsection{Symmetric probability measures}

We remark that the Monge-Kantorovich problem \eqref{MKO} can be restricted to symmetric transport plans.

\begin{definition}[Symmetric measures]\label{def:symmeasures}
A measure $\gamma \in \mathcal{P}(X^N)$ is symmetric if 
\[
\int_{X^N}\phi(x_1,\dots,x_N)\,\d\gamma = \int_{X^N} \phi(\overline{\sigma}(x_1,\dots,x_N))\,\d\gamma, \text{ for all } \phi \in  \mathcal{C}(X^N)
\]
and for all permutations $\overline{\sigma}$ of $N$ symbols. We denote by $\Gamma^{sym}(\rho)$, the space of all $\gamma \in \Gamma(\rho)$ which are symmetric.
\end{definition}

\begin{proposition}\label{prop:symplans}
Let $(X,d)$ be a Polish space. Suppose $\rho\in \mathcal{P}(X)$ such that $(A)$ and $(B)$ hold and $c\colon X^N\to\R\cup\lbrace +\infty\rbrace$ is a continuous cost function. Then,
\begin{equation}\label{eq:MKsymMK}
\min_{ \gamma \in \Gamma(\rho) } \int_{X^{N} } c(x_1,\ldots, x_N) \, \d  \gamma = \min_{ \gamma \in \Gamma^{sym}(\rho) } \int_{X^{N} } c(x_1,\ldots, x_N) \, \d  \gamma. 
\end{equation}
\end{proposition}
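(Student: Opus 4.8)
The plan is to symmetrize an arbitrary — in particular an optimal — transport plan by averaging it over the action of the symmetric group $S_N$ on $X^N$. Since $\Gamma^{sym}(\rho)\subseteq\Gamma(\rho)$, the inequality ``$\ge$'' in \eqref{eq:MKsymMK} (the right-hand side is at least the left-hand side, because an infimum over a subset is no smaller than the infimum over the whole set) is immediate; so the content is the reverse inequality together with the attainment of the minimum on the right.

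First I would invoke Proposition \ref{prop:existsminimizer} to fix a minimizer $\gamma\in\Gamma(\rho)$ of the left-hand side. For a permutation $\sigma$ of $\{1,\dots,N\}$ write $\overline\sigma\colon X^N\to X^N$, $\overline\sigma(x_1,\dots,x_N)=(x_{\sigma(1)},\dots,x_{\sigma(N)})$, and set
\[
\gamma^{sym}:=\frac{1}{N!}\sum_{\sigma}\overline\sigma_\sharp\gamma.
\]
Then I would check that $\gamma^{sym}\in\Gamma^{sym}(\rho)$: it is a probability measure as a convex combination of such; it is symmetric because precomposing with a further permutation only reindexes the sum over $S_N$; and each of its marginals equals $\rho$, since $\mathrm{pr}^i_\sharp(\overline\sigma_\sharp\gamma)=\mathrm{pr}^{\sigma^{-1}(i)}_\sharp\gamma=\rho$ — here the hypothesis that all $N$ marginals of $\gamma$ coincide with $\rho$ is used crucially.

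Next I would compute the cost of $\gamma^{sym}$. The cost $c$ in \eqref{hyp:cost} is invariant under permutations of its arguments, $c\circ\overline\sigma=c$ for every $\sigma$, so by the change-of-variables formula for push-forwards $\int_{X^N}c\,\d(\overline\sigma_\sharp\gamma)=\int_{X^N}(c\circ\overline\sigma)\,\d\gamma=\int_{X^N}c\,\d\gamma$, and averaging over $\sigma$ gives $\int c\,\d\gamma^{sym}=\int c\,\d\gamma$. (These manipulations are legitimate because, exactly as in the proof of Proposition \ref{prop:existsminimizer}, $c\ge h$ for an $L^1_\gamma$-function $h$, so each integral is well defined in $\R\cup\{+\infty\}$, and in fact finite at the minimizer.) Hence $\gamma^{sym}$ is an admissible symmetric competitor whose cost equals the minimal value on the left, which gives the inequality ``$\le$'' and at the same time exhibits a minimizer of the right-hand side.

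There is essentially no serious obstacle here: the only points requiring a little care are the bookkeeping that the push-forward under a coordinate permutation preserves each marginal (which is precisely where equality of all marginals enters), and the observation that the integrals are well defined despite $c$ possibly being unbounded, both handled as above. The continuity assumption on $c$ is not actually needed for the symmetrization itself; what is essential is the symmetry $c\circ\overline\sigma=c$, which holds for every cost of the form \eqref{hyp:cost}.
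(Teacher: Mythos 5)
Your argument matches the paper's proof: average the pushforwards of an optimal plan under all coordinate permutations and observe that, by linearity of the integral together with permutation invariance of $c$, the cost is unchanged, so the symmetrized plan is an equally good competitor. You are slightly more careful than the paper in spelling out that the step $C(\overline\sigma_\sharp\gamma)=C(\gamma)$ uses the symmetry $c\circ\overline\sigma=c$ (the paper credits only ``linearity'') and that each marginal of $\gamma^{sym}$ equals $\rho$ precisely because all $N$ marginals of $\gamma$ coincide, but the approach is identical.
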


\begin{proof}
The minimum on the left-hand side in \eqref{eq:MKsymMK} is surely smaller than or equal to the minimum on the right-hand side, since $\Gamma^{sym}(\rho) \subset \Gamma(\rho)$. Suppose $\gamma \in \Gamma(\rho)$, we can define a symmetric plan 
\[
\gamma_{sym} = \dfrac{1}{N!}\sum_{\sigma \in \mathfrak{S}_N}\sigma_{\sharp}\gamma, \quad \sigma \in \mathfrak{S}_N,
\]
where $\mathfrak{S}_N$ is the set of permutation of $N$-symbols. Thanks to the linearity of the cost function $C(\gamma)$, $\gamma_{sym}$ and $\gamma$ have the same cost and, therefore, \eqref{eq:MKsymMK} holds.
\end{proof}

\section{Duality Theory for log-type cost functions}
\label{sec:duality}

The following theorem extends Kantorovich duality for our class of cost functions. 

\begin{theorem}\label{thm:dual}
Let $(X,d)$ be a Polish space. Suppose $\rho\in \mathcal{P}(X)$ such that $(A)$ and $(B)$ hold and $c\colon X^N\to\R\cup\lbrace +\infty\rbrace$ is a cost function 
\[
c(x_1,\ldots, x_N)=\sum_{1\le i<j\le N}f(d(x_i,x_j)),\quad \text{ for all } \, (x_1,\ldots,x_N)\in X^N,
\]
where $f\colon [0,+\infty[\to \R\cup\lbrace +\infty\rbrace$ is a function satisfying $(F1)$ and $(F2)$. Then, the duality holds: 
\begin{equation}\label{eq:dualityagain}
\min_{\gamma\in\Gamma(\rho)}\int_{X^N}c\,\d\gamma = \max_{u\in L^1_\rho(X)}\bigg\lbrace N\int_X u(x)\,\d\rho(x) ~:~ \sum^N_{i=1}u(x_i)\le c(x_1,\ldots, x_N)\text{ }\rho ^{\otimes (N)}\text{-a.e.} \bigg\rbrace.
\end{equation}
\end{theorem}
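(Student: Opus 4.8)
The plan is to deduce \eqref{eq:dualityagain} from the classical duality for bounded costs by a \emph{double} approximation: first replace $c$ by the truncations $c_R$ of \eqref{eq:crtrunc}, then truncate $c_R$ from above to get a bounded cost to which Kellerer's theorem applies, and finally remove both truncations using the compactness of $\Gamma(\rho)$ and the $\Gamma$-convergence result of Proposition~\ref{prop:gamma}. To begin, I record \textbf{weak duality}: for $\gamma\in\Gamma(\rho)$ and $u\in\mathcal F$, integrating $\sum_i u(x_i)\le c$ against $\gamma$ and using $\textrm{pr}^i_\sharp\gamma=\rho$ gives $N\int_X u\,\d\rho\le C(\gamma)$; since the left-hand minimum in \eqref{eq:dualityagain} is attained (Proposition~\ref{prop:existsminimizer}), this yields $\sup_{u\in\mathcal F}D(u)\le\min_{\gamma\in\Gamma(\rho)}C(\gamma)$. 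So it suffices to produce $u\in\mathcal F$ with $N\int_X u\,\d\rho\ge\min_\gamma C(\gamma)$.

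\textbf{Duality for $c_R$.} Fix $R>0$. Since $f_R(t)=\max\{f(R),f(t)\}\ge f(R)$, the cost $c_R=\sum_{i<j}f_R(d(x_i,x_j))$ is lower semicontinuous and bounded below by $\frac{N(N-1)}{2}f(R)$, but it equals $+\infty$ whenever two coordinates coincide, so Kellerer's theorem does not apply directly. For $M>\frac{N(N-1)}{2}f(R)$ set $c_R^M:=\min\{c_R,M\}$; this is bounded and continuous, with $\frac{N(N-1)}{2}f(R)\le c_R^M\le M$, so \cite{Kel} yields a function $u_R^M\in L^1_\rho(X)$ with $\sum_i u_R^M(x_i)\le c_R^M$ $\rho^{\otimes(N)}$-a.e.\ and
\[
\min_{\gamma\in\Gamma(\rho)}\int_{X^N}c_R^M\,\d\gamma \;=\; N\int_X u_R^M\,\d\rho.
\]
As $M\uparrow\infty$ the functionals $\gamma\mapsto\int c_R^M\,\d\gamma$ increase to $\gamma\mapsto C_R(\gamma)$ and are equicoercive on the weakly compact set $\Gamma(\rho)$, so, arguing as in Proposition~\ref{prop:gamma} and applying Theorem~\ref{thm:existence}, $\min_\gamma\int c_R^M\,\d\gamma\to\min_\gamma C_R(\gamma)$. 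Because $c_R^M\le c_R$ we have $u_R^M\in\mathcal F_R$, and the weak-duality inequality $\sup_{u\in\mathcal F_R}D(u)\le\min_\gamma C_R(\gamma)$ together with $D(u_R^M)\to\min_\gamma C_R(\gamma)$ forces $\sup_{u\in\mathcal F_R}D(u)=\min_\gamma C_R(\gamma)$. To get attainment, replace $u_R^M$ by a $c_R^M$-concave representative (symmetrising if convenient, cf.\ Proposition~\ref{prop:symplans}): since $c_R$ agrees with $c$ near the singularity, Theorem~\ref{thm:offdiagonal} confines the optimal plans, hence the relevant values of $c_R^M=c_R$, to $X^N\setminus D_\alpha$, where $c_R$ is real-valued with a local modulus of continuity independent of $M$, while Assumption (B) controls $u_R^M$ at infinity; thus $(u_R^M)_M$ is locally equibounded and equicontinuous and, along a subsequence, converges to some $u_R\in\mathcal F_R$ with $N\int_X u_R\,\d\rho=\min_\gamma C_R(\gamma)$. (Alternatively, the duality with attainment for each $c_R$ follows from the argument of \cite{BuChaDeP}, which applies once $c_R$ has been made bounded below.)

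\textbf{Removing the truncation.} By Proposition~\ref{prop:gamma} and Theorem~\ref{thm:existence}, $\min_\gamma C_R(\gamma)\to\min_\gamma C(\gamma)$ as $R\to\infty$, and the maximizers above satisfy $\sum_i u_R(x_i)\le c_R$ with $D(u_R)=\min_\gamma C_R(\gamma)$. The singularity of every $c_R$ lies in the same place, so Theorem~\ref{thm:offdiagonal} again furnishes a local modulus of continuity uniform in $R$, and Assumption (B) bounds the tails uniformly in $R$; hence the $c_R$-concave representatives $u_R$ are locally equibounded and equicontinuous, and a subsequence converges to some $u$. Passing to the limit in $\sum_i u_R(x_i)\le c_R$ and using $c_R\downarrow c$ gives $\sum_i u(x_i)\le c$ $\rho^{\otimes(N)}$-a.e., i.e.\ $u\in\mathcal F$, while $N\int_X u\,\d\rho=\lim_R N\int_X u_R\,\d\rho=\min_\gamma C(\gamma)$. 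With weak duality, this establishes \eqref{eq:dualityagain} together with attainment of the right-hand supremum.

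\textbf{Main obstacle.} The delicate point is the compactness of the Kantorovich potentials: one must show that the $c_R^M$- and $c_R$-concave maximizers are locally equibounded and equicontinuous \emph{uniformly} in the truncation parameters, on a possibly non-compact Polish space, and that these bounds are strong enough to pass to the limit both in the integral $N\int_X u\,\d\rho$ and in the $\rho^{\otimes(N)}$-a.e.\ constraint. The two ingredients that make this work are Theorem~\ref{thm:offdiagonal}, which pushes all the relevant mass, hence all the relevant cost values, into the region $X^N\setminus D_\alpha$ on which $f$ and every $f_R$ with $R>\alpha$ are finite and locally Lipschitz, and Assumption (B), which plays the role of a finite-second-moment condition and prevents the potentials from diverging to $-\infty$ near infinity; carrying out the multi-marginal $c$-transform and checking that the limiting function is an admissible competitor is the technical heart of the argument.
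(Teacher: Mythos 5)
Your proof takes a genuinely different route from the paper's, and the route has a real gap at the compactness step that you yourself flag as the "technical heart."

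\textbf{What you do differently.} You keep the marginal $\rho$ throughout and approximate the cost twice: $c_R^M=\min\{c_R,M\}$ gives a bounded cost to which Kellerer applies, and letting $M\uparrow\infty$ and then $R\uparrow\infty$ is supposed to produce a limiting potential by Arzel\`a--Ascoli. The paper instead fixes an optimal $\gamma$ for $C$, restricts it to $B(o,L)^N$ to get $\gamma_L^P$ with marginal $\rho_L$, and invokes the duality of Buttazzo--Champion--De~Pascale for the pair $(c_R,\rho_{R/2})$ (so $\gamma_{R/2}^P$ is automatically optimal for both $c$ and $c_R$). It then normalizes the potentials $u_R$ at a fixed point $(\overline x_1,\dots,\overline x_N)$ in the support, derives upper and lower $L^\infty$-bounds on each $\spt\rho_L$ uniformly in $R$ from the inf-representation plus Theorem~\ref{thm:offdiagonal}, extracts a \emph{weak} $L^1$ limit $u$ by a diagonal argument, and checks $u\in\mathcal F$ by Mazur's lemma together with a contradiction argument. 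The choice of marginal, the normalization, and the weak-compactness/Mazur machinery are all points where your argument diverges from the paper's.

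\textbf{The gap.} You claim that the $c_R^M$- and $c_R$-concave maximizers are locally equicontinuous uniformly in $M$ and $R$, so that a subsequence converges locally uniformly and the pointwise constraint $\sum_i u_R(x_i)\le c_R$ passes to the limit. Under the stated hypotheses this does not hold. Theorem~\ref{thm:dual} only assumes (F1) and (F2): $f$ is continuous and decreasing with $f(0^+)=+\infty$. No Lipschitz or uniform-continuity assumption on $f|_{[\alpha,\infty[}$ is made (that extra hypothesis appears only in Theorem~\ref{thm:ulip}). Take for instance $f(t)=1/t-t^2$: it satisfies (F1) and (F2), but on $[\alpha,R]$ the slope of $f_R$ grows like $R$, so the modulus of continuity inherited by the $c_R$-transform degrades as $R\to\infty$. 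Thus the family $(u_R)$ need not be equicontinuous, and Arzel\`a--Ascoli does not apply. Similarly, your "Assumption (B) controls $u_R^M$ at infinity, hence equiboundedness" is not a valid inference: (B) is an integrability condition on $\rho$ against $f(2d(\cdot,o))$ and does not by itself bound the potentials; the paper obtains uniform bounds only after the explicit normalization at a support point, which your argument omits. Without equicontinuity you only get weak $L^1$ precompactness, and then passing the a.e.\ inequality $\sum_i u_R(x_i)\le c_R$ to the weak limit is precisely the subtle step; the paper handles it with Mazur's lemma (convex combinations converge strongly), a device that is missing from your proposal. To repair your argument you would either have to add a Lipschitz/uniform-continuity assumption on $f$ away from the singularity, or replace the Arzel\`a--Ascoli step by the normalization-plus-weak-compactness-plus-Mazur scheme that the paper uses.
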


\begin{proof}
Due to Proposition \ref{prop:existsminimizer} the minimum on the left-hand side is realized. 
By using the monotonicity of integral and the fact that $\gamma \in \Gamma(\rho)$, we easily get 
\[
\min_{\gamma\in \Gamma(\rho)}C(\gamma)\ge \sup_{u\in \mathcal{F}} D(u).
\] 
Hence, we need to show that
\be\label{eq:newclaim}
\min_{\gamma\in \Gamma(\gamma)}C(\gamma)\le \sup_{u\in \mathcal{F}} D(u)
\ee
and that a maximizer for $\max_{u\in \mathcal{F}} D(u)$ exists.

Towards this goal, let us fix a minimizer $\gamma$ of $C$. It now suffices to show that there exists a function $u\in \mathcal{F}$ such that
\[C(\gamma)\le D(u)\,.\]

For each $L>0$, let us denote $\gamma_L = \gamma\restr{B(o,L)^N}$, and by $\gamma_L^P$ the normalized versions of $\gamma_L$. Notice that
because of Assumption (B), $\gamma_L \ne 0$ for large enough $L>0$. 
Let us denote the marginals of $\gamma_L^P$ by  $\rho_L$.

Now, $\gamma_L^P$ is optimal also for all $C_R$ with $R\ge 2L$, since $C = C_R$ for all couplings of $\rho_L$. 
Let $(u_R)$ be a sequence of Kantorovich potentials, each corresponding to $\gamma_{R/2}^P$ with the cost $c_R$ and the marginals $\rho_{R/2}$.
By \cite[Lemma 3.3]{BuChaDeP}, we may assume that for all $R$ and all $x_1\in X$ we have the representation 
\begin{equation}\label{eq:presentationR}
u_R(x_1)=\inf\left\{\sum_{i=1}^N c_R(x_1,x_2,\ldots,x_N)-\sum_{j=2}^Nu_R(x_j)~\bigg|~(x_2,\ldots,x_N)\in X^{N-1}\right\}.
\end{equation}
Let us fix $R_0>0$ such that $\gamma_{R_0/2} \ne 0$,
and a point $(\overline x_1, \dots, \overline x_N) \in \spt(\gamma_{R_0/2})$.

We may then assume that for all $R \ge R_0$, we have
\[
u_R(\overline x_i)=\frac 1Nc_R(\overline x_1,\ldots, \overline x_N)=\frac 1Nc(\overline x_1,\ldots, \overline x_N)\text{ for all }i,
\]
since $(\overline x_1, \dots, \overline x_N) \in \spt(\gamma_{R_0/2}) \subset \spt(\gamma_{R/2})$.

Now we have, for all $R \ge R_0$ and for all $x=(x_1,\ldots,x_N)\in X^N$, by \eqref{eq:presentationR} and Theorem \ref{thm:offdiagonal}, for some $\alpha>0$ the estimate 
\begin{align*}
u_R(x_1)&\le c_R(x_1,\overline x_2, \dots, \overline x_N) - \frac {N-1}Nc_R(\overline x_1,\ldots, \overline x_N)\\
& \le \frac{N(N-1)}{2}f(\frac{\alpha}2)- \frac {N-1}Nc_R(\overline x_1,\ldots, \overline x_N) =: M,
\end{align*}
since by the fact that $(\overline x_1, \dots, \overline x_2) \in X^N\setminus D_\alpha$, we may assume (by changing $\overline x_1$ with some other $\overline x_i$), that $d(x_1,\overline x_j) \ge \frac{\alpha}{2}$ for all $j \in \{2,\dots, N\}$.

For the lower bound, we use again the representation \eqref{eq:presentationR} and the upper bound that we just obtained. For all $x=(x_1,\ldots,x_N)\in \spt(\gamma_{L}^P)$, when $R \ge 2L$, we have 
\begin{align*}
u_R(x_1)&=\sum_{1\le i<j\le N}f(d(x_i,x_j)) - \sum_{j=2}^Nu_R(x_j) \\
& \ge \frac{N(N-1)}{2}f(2L) - (N-1)M.
\end{align*}

What we have shown is that for each $L$ the sequence $(u_R)$ is bounded on $\spt\rho_L$ when $R \ge 2L$. So, we may in each set $\spt(\rho_L)$ define $u$ as the weak limit of $u_R$ along some subsequence, and finally define $u$ in the whole space by a diagonal argument.
Now, assuming that we have that $u \in \mathcal F$, by the definition of $\gamma_L$, and by the weak convergence we get
\[
 C(\gamma) = \lim_{R \to \infty}C(\gamma_{R/2}^P) = \lim_{R \to \infty}D_R(u_R) = D(u).
\]
Thus, it remains to show that $u \in \mathcal F$. Supposing this is not the case, 
there exists a Borel set $A\subseteq X^N$ such that $\rho ^{\otimes (N)}(A)>0$ and 
\begin{equation}\label{eq:boundfails}
u(x_1)+\cdots +u(x_N)>c(x_1,\ldots, x_N)\text{ for all }(x_1,\ldots, x_N)\in A.
\end{equation}
By going into a subset of $A$ if necessary, we may assume that $A \subset (\spt\rho_L \cap B(0,L))^N$ for some $L>0$.
Now, by Mazur's lemma, there is a sequence $(\tilde u_R)$ of convex combinations of $(u_R)_{R \ge 2L}$ strongly converging to $u$ in $L^1(\rho)$.
Since, $c_R = c$ on $A$ for all $R \ge 2L$, we have
\begin{equation}\label{eq:boundholds}
\tilde u_R(x_1)+\cdots +\tilde u_R(x_N) \le c(x_1,\ldots, x_N)\text{ for all }(x_1,\ldots, x_N)\in A,
\end{equation}
for all $R \ge 2L$, as the inequality is preserved under convex combinations.

Let us denote
\[l:=\int_{A}(u(x_1)+\cdots+u(x_N)-c(x_1,\ldots,x_N))\,\d\rho^{\otimes (N)}.\]
Due to \eqref{eq:boundfails} we have $l>0$. 
Because $\tilde u_R \to u$ strongly, there exists $R_1 \ge 2L$ such that 
\be
\int_{A}\sum_{i=1}^N |\tilde u_R(x_i)-u(x_i)|\,\d\rho ^{\otimes (N)}<\frac l2\text{ for all }R\ge R_1.\label{eq:uestimate}
\ee
Then we have for all $R>R_1$
\begin{align*}
\int_{A}&\left(\sum_{i=1}^N\tilde u_R(x_i)-c(x_1,\ldots,x_N)\right)\,\d\rho ^{\otimes (N)}\\
&=\int_A\sum_{i=1}^N(\tilde u_R(x_i)-u(x_i))\,\d\rho ^{\otimes (N)}+\int_A\sum_{i=1}^Nu(x_i)-c(x_1,\ldots, x_N)\,\d\rho ^{\otimes (N)}\\
& >l-\frac l2=\frac l2>0,\end{align*}
contradicting \eqref{eq:boundholds}.
\end{proof}

\section{Properties of the Kantorovich potentials}
\label{sec:propKanto}

Let $C(\gamma)$ be as before
\[
C(\gamma) = \int_X \sum_{1\le i<j\le N}f(d(x_i,x_j)) \,\d\gamma.
\]
We denote by $C^R(\gamma)$ the truncation of a cost $C(\gamma)$ from above\footnote{Notice that we have used the notation $C_R$ to correspond to the cost truncated from below.}, 
\[C^{R}(\gamma)=\int_{X^N}c^R(x_1,\dots,x_N)\,\d\gamma, \, \text{ for all }\gamma \in \mathcal{P}(X^N),\] 
where we have denoted by $c^R$ the corresponding truncation of $c$,
\[
 c^R(x_1,\dots,x_N) = \sum_{1\le i<j\le N}\min\{R,f(d(x_i,x_j)\}).
\]

\begin{proposition}\label{prop:truncatedequality}
Let $\rho\in \mathcal{P}(X)$ satisfy the assumptions $(A)$ and $(B)$. Fix $\beta>0$ such that 
\[\sup_{x\in X}\rho(B(x,\beta))<\frac{1}{N(N-1)^2}.\]
Then,  for any $\alpha<f^{-1}\left(\frac{N^2(N-1)}{2}f(\beta)\right)$ and for all optimal $\gamma \in \Gamma(\rho)$ associated to $C(\gamma)$, we have
\be\label{eq:firstclaim}
C(\gamma)\le \frac{N^3(N-1)^2}{4}f(\beta)\quad \text{ and } \quad C(\gamma) = C^{f(\alpha)}(\gamma).\ee
Moreover, for the same $\alpha$, any Kantorovich potential $u_\alpha$ for $C^{f(\alpha)}$ is also a Kantorovich potential for $C$.\label{eq:thirdclaim}
\end{proposition}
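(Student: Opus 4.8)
The plan is to exploit Theorem \ref{thm:offdiagonal}, which tells us that every optimal $\gamma$ is concentrated on $X^N \setminus D_\alpha$ for every $\alpha < f^{-1}\left(\frac{N^2(N-1)}{2}f(\beta)\right)$, i.e. $d(x_i,x_j) \ge \alpha$ for $\gamma$-a.e.\ $(x_1,\dots,x_N)$ and all $i \ne j$. First I would establish the energy bound: using that $\gamma$ is a minimizer and comparing with the symmetric product-type competitor built from $\rho$ (or directly estimating, via the small-concentration assumption $(A)$, the $\gamma$-measure of the sets where $d(x_i,x_j)$ is small), one bounds $C(\gamma)$ from above. Concretely, because $f$ is decreasing and $\gamma$ lives outside $D_\alpha$, on $\spt(\gamma)$ we can split each pair term using that at most a controlled fraction of mass can have $d(x_i,x_j) < \beta$; combining the $\binom{N}{2}$ pair contributions with the trivial bound $f(d(x_i,x_j)) \le f(\beta)$ on the bulk and the measure estimate on the remaining part yields $C(\gamma) \le \frac{N^3(N-1)^2}{4}f(\beta)$. (This is exactly the quantitative version of the argument behind Theorem \ref{thm:offdiagonal}, so the constant should fall out of the same computation.)

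Next, for the identity $C(\gamma) = C^{f(\alpha)}(\gamma)$: since $d(x_i,x_j) \ge \alpha$ for $\gamma$-a.e.\ point and $f$ is decreasing, $f(d(x_i,x_j)) \le f(\alpha)$ for $\gamma$-a.e.\ point, hence $\min\{f(\alpha), f(d(x_i,x_j))\} = f(d(x_i,x_j))$ $\gamma$-a.e. Therefore $c^{f(\alpha)} = c$ $\gamma$-a.e., and integrating gives $C^{f(\alpha)}(\gamma) = C(\gamma)$. One should check that $C(\gamma)$ is finite (which is guaranteed by Proposition \ref{prop:existsminimizer} together with the bound just proved) so the subtraction/integration is legitimate.

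For the last sentence — any Kantorovich potential $u_\alpha$ for $C^{f(\alpha)}$ is also a Kantorovich potential for $C$ — the strategy is a two-sided inequality. On one hand, $c^{f(\alpha)} \le c$ pointwise (truncation from above only lowers the cost), so the constraint set for the $c^{f(\alpha)}$-dual is contained in $\mathcal F$; hence $N\int_X u_\alpha\,\d\rho \le \max_{u \in \mathcal F} D(u) = \min_{\gamma} C(\gamma)$ by Theorem \ref{thm:dual}. On the other hand, by duality for the truncated cost $c^{f(\alpha)}$ (which is bounded from below and above and so falls within Kellerer's framework, or can be reached via the same $\Gamma$-convergence machinery), $N\int_X u_\alpha\,\d\rho = \min_{\gamma' \in \Gamma(\rho)} C^{f(\alpha)}(\gamma') \ge C^{f(\alpha)}(\gamma) = C(\gamma) = \min_\gamma C(\gamma)$, where the last steps use the first claim \eqref{eq:firstclaim} and optimality of $\gamma$. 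Combining, $N\int_X u_\alpha\,\d\rho = \min_\gamma C(\gamma)$ with $u_\alpha \in \mathcal F$, so $u_\alpha$ is optimal for the $c$-dual.

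The main obstacle I expect is the explicit energy bound $C(\gamma) \le \frac{N^3(N-1)^2}{4}f(\beta)$: one has to extract the precise numerical constant from the combinatorics of the $N(N-1)/2$ pair interactions and the covering/concentration estimate $\sup_x \rho(B(x,\beta)) < \frac{1}{N(N-1)^2}$, rather than just an order-of-magnitude bound. Everything else is a bookkeeping consequence of Theorem \ref{thm:offdiagonal}, Theorem \ref{thm:dual}, and the monotonicity of $f$; the subtle points are merely checking finiteness of all integrals and making sure the $\gamma$-a.e.\ statements upgrade correctly when passing between $c$ and its truncation.
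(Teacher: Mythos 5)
Your plan for the first estimate $C(\gamma)\le \tfrac{N^3(N-1)^2}{4}f(\beta)$ goes astray. You propose to redo a concentration/covering argument (``split each pair term using that at most a controlled fraction of mass can have $d(x_i,x_j)<\beta$'') or to compare with a product-type competitor, and you remark that the constant ``should fall out of the same computation'' as Theorem~\ref{thm:offdiagonal}. Neither route is needed, and the one you sketch does not close: bounding $\gamma\bigl(\{d(x_i,x_j)<\beta\}\bigr)$ by $\sup_x\rho(B(x,\beta))$ is not automatic, since $\gamma$ is only a coupling and not a product. The actual argument is a one-liner downstream of Theorem~\ref{thm:offdiagonal}: for any admissible $\alpha$ the support of $\gamma$ avoids $D_\alpha$, so on $\spt(\gamma)$ every pair has $d(x_i,x_j)\ge\alpha$ and hence, by monotonicity of $f$, $c\le \tfrac{N(N-1)}{2}f(\alpha)$ pointwise; integrating gives $C(\gamma)\le\tfrac{N(N-1)}{2}f(\alpha)$, and letting $\alpha\uparrow f^{-1}\bigl(\tfrac{N^2(N-1)}{2}f(\beta)\bigr)$ and using continuity of $f$ yields exactly $\tfrac{N(N-1)}{2}\cdot\tfrac{N^2(N-1)}{2}f(\beta)=\tfrac{N^3(N-1)^2}{4}f(\beta)$. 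The factor $\tfrac{N^2(N-1)}{2}$ is built into the admissible range of $\alpha$, not re-derived from the concentration hypothesis.

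Your argument for $C(\gamma)=C^{f(\alpha)}(\gamma)$ is correct and in fact slightly cleaner than the paper's. You apply Theorem~\ref{thm:offdiagonal} directly to the $C$-minimizer $\gamma$, note that $d(x_i,x_j)\ge\alpha$ forces $c^{f(\alpha)}=c$ on $\spt(\gamma)$, and integrate; the paper instead detours through an optimal plan $\gamma_\alpha$ for the truncated cost (whose off-diagonal support needs the truncated analogue of Theorem~\ref{thm:offdiagonal}) and concludes by a sandwich $C(\gamma)\le C^{f(\alpha)}(\gamma_\alpha)\le C^{f(\alpha)}(\gamma)\le C(\gamma)$. Your version avoids re-invoking the off-diagonal theorem for a different cost.

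For the third claim your chain of inequalities is right (the inclusion of feasible sets from $c^{f(\alpha)}\le c$, plus $\min C^{f(\alpha)}=\min C$ from the first part), and it is the same argument as in the paper. One small inaccuracy: you justify duality for $c^{f(\alpha)}$ by saying it ``is bounded from below and above and so falls within Kellerer's framework.'' The truncation $c^{f(\alpha)}=\sum\min\{f(\alpha),f(d(x_i,x_j))\}$ removes the singularity and is bounded \emph{above} by $\tfrac{N(N-1)}{2}f(\alpha)$, but it is \emph{not} bounded below unless $f$ is, so Kellerer's two-sided condition need not hold verbatim. (The paper itself simply presupposes that a Kantorovich potential for $C^{f(\alpha)}$ exists.) Your fallback to the $\Gamma$-convergence machinery is the right instinct, but you should be aware that (F2) fails for the truncated $f$, so Theorem~\ref{thm:dual} does not apply to it as stated either; this is a gap shared with the paper rather than one you introduced.
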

\begin{proof}
For each 
\[\alpha<f^{-1}\left(\frac{N^2(N-1)}{2}f(\beta)\right),\]
we know by Theorem \ref{thm:offdiagonal} that the support of $\gamma$ can intersect at most the boundary of $D_\alpha$. Therefore, since $f$ is 
decreasing, we have for all $(x_1,\ldots, x_N)\in \spt(\gamma)$ the estimate 
\[c(x_1,\ldots, x_N)= \sum_{1\le i<j\le N}f(d(x_i,x_j))\le \frac{N(N-1)}{2}f(\alpha).\]
Thus, since $\gamma$ is a probability measure, we have
\[C(\gamma)\le \int_{X^N}\frac{N(N-1)}{2}f(\alpha)\,\d\gamma=\frac{N(N-1)}{2}f(\alpha).\]
Taking $\alpha \to f^{-1}\left(\frac{N^2(N-1)}{2}f(\beta)\right)$, we then get
\begin{align*}
 C(\gamma)& \le \frac{N(N-1)}{2}f\left(f^{-1}\left(\frac{N^2(N-1)}{2}f(\beta)\right)\right)\\
 & =\frac{N(N-1)}{2}\cdot\frac{N^2(N-1)}{2}f(\beta)=\frac{N^3(N-1)^2}{4}f(\beta),
\end{align*}
which gives the left-hand side in \eqref{eq:firstclaim}. Let us then fix an optimal plan $\gamma_\alpha$ for the cost $C^{f(\alpha)}$. Then $\spt(\gamma_\alpha)\in X^N\setminus D_\alpha$, so $c=c^{f(\alpha)}$ on $\spt(\gamma_\alpha)$. Thus, 
\[C(\gamma)\le \int_{X^N}c\,\d\gamma_\alpha=\int_{X^N}c^{f(\alpha)}\,\d\gamma_\alpha=C^{f(\alpha)}(\gamma_\alpha).\]
The opposite inequality is simply due to the monotonicity of the integral. It remains to prove the last part of the statement. We fix a Kantorovich potential $u_\alpha$ for $C^{f(\alpha)}$. It satisfies, for $\rho ^{\otimes (N)}$-almost every $(x_1,\ldots, x_N) \in X^N$ the estimate
\[u_\alpha(x_1)+\cdots +u_\alpha(x_N)\le c^{f(\alpha)}(x_1,\ldots,x_N)\le c(x_1,\ldots, x_N).\]
Hence,  $u_\alpha$ is also a Kantorovich potential for the cost function $c$ and, moreover,
\[\int_X u(x)\,\d\rho(x) = \min_{\gamma\in \Gamma(\rho)} C(\gamma) = \min_{\gamma\in \Gamma(\rho)} C^{f(\alpha)}(\gamma) = N\int_X u_{\alpha}(x)\,\d\rho(x).\]
This concludes the proof.
\end{proof}

\begin{theorem}\label{thm:ulip} Let $(X,d)$ be a Polish space. Suppose $\rho\in \mathcal{P}(X)$ such that $(A)$ and $(B)$ hold and $c\colon X^N\to\R\cup\lbrace +\infty\rbrace$ is a cost function 
\[
c(x_1,\ldots, x_N)=\sum_{1\le i<j\le N}f(d(x_i,x_j)),\quad \text{ for all } \, (x_1,\ldots,x_N)\in X^N,
\]
where $f\colon [0,+\infty[\to \R\cup\lbrace +\infty\rbrace$ is a function satisfying $(F1)$ and $(F2)$.

Let $\beta>0$ be such that
\[\sup_{x\in X}\rho (B(x,\beta)) <\frac{1}{N(N-1)^2}.\]
 Assume additionally that, for some $\alpha<f^{-1}\left(\frac{N^2(N-1)}{2}f(\beta)\right)$, the restriction $f|_{[\alpha,\infty[}$ is Lipschitz. Then,  there exists a Kantorovich potential $w$ in \eqref{eq:dualityagain}  that is Lipschitz.
\end{theorem}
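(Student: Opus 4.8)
The plan is to reduce the statement, via Proposition~\ref{prop:truncatedequality}, to constructing a Lipschitz maximizer of the dual problem for the upper-truncated cost $c^{f(\alpha)}$: by that proposition any Kantorovich potential for $c^{f(\alpha)}$ is automatically a Kantorovich potential for $c$, so it suffices to exhibit a Lipschitz one for $c^{f(\alpha)}$. Put $L:=\operatorname{Lip}\bigl(f|_{[\alpha,\infty[}\bigr)$. The key elementary observation is that $t\mapsto\min\{f(\alpha),f(t)\}$ is constantly equal to $f(\alpha)$ on $[0,\alpha]$ and equals $f$ on $[\alpha,\infty[$, hence is $L$-Lipschitz on all of $[0,\infty[$; since each $d(\cdot,y)$ is $1$-Lipschitz, for every fixed $(x_2,\ldots,x_N)$ the map $x_1\mapsto c^{f(\alpha)}(x_1,x_2,\ldots,x_N)$ is $(N-1)L$-Lipschitz, with a constant that does not depend on $(x_2,\ldots,x_N)$. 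The same holds, with the same constant, after the further truncation from below, i.e.\ for the cost obtained by replacing $f(d(\cdot,\cdot))$ in every summand by $\max\{f(R),f(d(\cdot,\cdot))\}$, because $\max\{f(R),f(\cdot)\}$ is again $L$-Lipschitz on $[\alpha,\infty[$.

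With this in hand I would repeat the construction in the proof of Theorem~\ref{thm:dual}, reading it with $c^{f(\alpha)}$ in place of $c$; note that $\min_{\Gamma(\rho)}C^{f(\alpha)}=\min_{\Gamma(\rho)}C$ and that an optimal plan for $C^{f(\alpha)}$ still avoids $D_\alpha$ (Theorem~\ref{thm:offdiagonal} and Proposition~\ref{prop:truncatedequality}), so that $c^{f(\alpha)}$ and $c$ agree on its support. Exhausting $X$ by the balls $B(o,R/2)$ exactly as there, for each large $R$ one applies \cite[Lemma 3.3]{BuChaDeP} to the \emph{bounded} cost
\[
\bar c_R(x_1,\ldots,x_N):=\sum_{1\le i<j\le N}\min\bigl\{f(\alpha),\max\{f(R),f(d(x_i,x_j))\}\bigr\}
\]
with the compactly supported marginals $\rho_{R/2}$, obtaining a Kantorovich potential $u_R$ that satisfies the $c$-transform representation \eqref{eq:presentationR} with $\bar c_R$ in place of $c_R$. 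By the first paragraph this $u_R$ is an infimum of a family of $(N-1)L$-Lipschitz functions, and such an infimum, being finite at one point (it is an $L^1_\rho$-function), is real-valued and $(N-1)L$-Lipschitz everywhere. Since every summand of $\bar c_R$ is bounded above by $f(\alpha)$, the estimate in Theorem~\ref{thm:dual} gives a uniform upper bound $u_R\le M$. A standard diagonal argument over a countable dense subset of $X$ --- the equi-Lipschitz bound transporting pointwise convergence from the dense set to all of $X$ --- then produces a subsequence with $u_{R_k}\to w$ pointwise, $w$ being $(N-1)L$-Lipschitz and $\le M$.

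It remains to show $w$ is a Kantorovich potential for $C^{f(\alpha)}$, and this is word for word the concluding part of the proof of Theorem~\ref{thm:dual}: passing to the limit along $(R_k)$ yields $N\int_X w\,\d\rho=\min_{\Gamma(\rho)}C^{f(\alpha)}=\min_{\Gamma(\rho)}C$, which together with $w\le M$ gives $w\in L^1_\rho(X)$, while Mazur's lemma applied to convex combinations of the $u_{R_k}$ gives $w(x_1)+\cdots+w(x_N)\le c^{f(\alpha)}(x_1,\ldots,x_N)$ for $\rho^{\otimes (N)}$-a.e.\ $(x_1,\ldots,x_N)$. Thus $w$ is a Kantorovich potential for $C^{f(\alpha)}$ and, by Proposition~\ref{prop:truncatedequality}, for $C$; since it is Lipschitz, the theorem follows.

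I expect the main difficulty to be purely organizational: keeping the Lipschitz constant uniform through both truncations. The truncation from above at $f(\alpha)$ is what removes the non-Lipschitz singularity of $f$ near $0$, and this is the only place the hypothesis on $\alpha$ is used (through Theorem~\ref{thm:offdiagonal} and Proposition~\ref{prop:truncatedequality}); the truncation from below at $f(R)$ is still needed because $c^{f(\alpha)}$ may be unbounded below, and the point to check is that it does not enlarge the Lipschitz constant, which it does not since $\max\{f(R),f(\cdot)\}$ has the same Lipschitz constant as $f|_{[\alpha,\infty[}$. The remaining work is to re-run the bookkeeping of Theorem~\ref{thm:dual} for $c^{f(\alpha)}$ --- in fact somewhat easier, as $c^{f(\alpha)}$ is bounded above --- together with a routine diagonal compactness extraction, valid in a separable (possibly non-locally-compact) Polish space.
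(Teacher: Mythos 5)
Your proposal is correct, but it takes a noticeably different (and longer) route than the paper. The paper's proof is essentially four lines: invoke Lemma~\ref{lm:infpotentiallemma} (applied to the cost $c^{f(\alpha)}$) to get a Kantorovich potential $u_\alpha$ for $C^{f(\alpha)}$ satisfying the infimum representation, note via Proposition~\ref{prop:truncatedequality} that $u_\alpha$ is then automatically a potential for $C$, and conclude that $u_\alpha$ is Lipschitz because, once $f|_{[\alpha,\infty[}$ is Lipschitz, the family being infimized is uniformly Lipschitz. You instead re-run the entire construction of Theorem~\ref{thm:dual} with the upper-truncated cost, establish an equi-Lipschitz bound on the approximating potentials $u_R$, and extract a pointwise/locally uniform limit. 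What your version buys is a more self-contained treatment: the paper applies Lemma~\ref{lm:infpotentiallemma} to $c^{f(\alpha)}$, a cost that fails (F2), leaving implicit both why a Kantorovich potential for $C^{f(\alpha)}$ exists and why the lemma extends to it, whereas you construct the potential directly. What it costs is some extra bookkeeping, and there is one imprecision to flag: your claim that the conclusion is ``word for word the concluding part of the proof of Theorem~\ref{thm:dual}'' is not quite accurate, since there the limit $u$ is obtained by weak $L^1$ convergence (hence the Mazur step), while you have the stronger locally uniform convergence. In your setting Mazur's lemma is superfluous: the infimum representation makes the constraint $u_R(x_1)+\cdots+u_R(x_N)\le\bar c_R(x_1,\ldots,x_N)$ hold for \emph{every} $(x_1,\ldots,x_N)$, and since $\bar c_R\nearrow c^{f(\alpha)}$ pointwise, the constraint for $w$ follows directly from pointwise convergence. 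Likewise $N\int w\,\d\rho=\min_{\Gamma(\rho)}C^{f(\alpha)}$ should be justified from locally uniform convergence together with the uniform upper bound $u_R\le M$ and the Lipschitz lower envelope, rather than by citing the weak-$L^1$ argument. These are cosmetic fixes; the strategy is sound.
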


The following lemma is useful for proving Theorem \ref{thm:ulip}. The proof follows in the same way as the proof of \cite[Lemma 3.3]{BuChaDeP}.

\begin{lemma}\label{lm:infpotentiallemma}
Let $u$ be a Kantorovich potential for the problem \eqref{KMOT}, i.e. a maximizer of the problem \eqref{KMOT}.  Then there exists a Kantorovich potential $\tilde u$ such that $\tilde u\ge u$ which satisfies the representation 
\be\label{eq:infpotential}
\tilde u(x)=\inf\left\{c(x,x_2,\ldots, x_N)-\sum_{i\ge 2}\tilde u(x_i)~:~x_j\in X\text{ for all }j\right\}\,.\ee
\end{lemma}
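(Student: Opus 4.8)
The plan is to reproduce the $c$-conjugation (``$c$-transform'') argument of \cite[Lemma 3.3]{BuChaDeP}, which goes back to an idea of Rachev. First I would fix a convenient representative of $u$: replacing $u$ by the function $x\mapsto\limsup_{r\to0}\operatorname{ess\,sup}_{y\in B(x,r)}u(y)$ produces an upper semicontinuous $\rho$-representative that equals $-\infty$ outside $\spt(\rho)$ and for which the constraint $u(x_1)+\cdots+u(x_N)\le c(x_1,\ldots,x_N)$ holds at \emph{every} point of $X^N$ (trivially outside $(\spt\rho)^N$, and on $(\spt\rho)^N$ by approximating along $\rho^{\otimes(N)}$-a.e.\ points, using upper semicontinuity of $u$ and continuity of $c$ off the diagonal). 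Combining this with the truncation--exhaustion--diagonal scheme from the proof of Theorem \ref{thm:dual} and with Theorem \ref{thm:offdiagonal}, I may moreover assume that $u$ is bounded on every ball, so that the infima below are finite.

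Next I would set
\[
\tilde u(x):=\inf\Big\{c(x,x_2,\ldots,x_N)-\sum_{i=2}^N u(x_i)\ :\ (x_2,\ldots,x_N)\in X^{N-1}\Big\}.
\]
Since the constraint on $u$ now holds everywhere, $\tilde u\ge u$ pointwise, hence $D(\tilde u)\ge D(u)$. For the reverse inequality, take an optimal plan $\gamma$ (Proposition \ref{prop:existsminimizer}), which by Proposition \ref{prop:symplans} may be chosen symmetric. Because $u$ is admissible everywhere and a maximizer, $0=D(u)-C(\gamma)=\int\big(\sum_i u(x_i)-c\big)\,\d\gamma$ with the integrand nonpositive and upper semicontinuous, so $\sum_i u(x_i)=c$ on all of $\spt(\gamma)$; feeding the last $N-1$ coordinates of a contact point of $\gamma$ through $x_1$ into the infimum yields $\tilde u(x_1)\le u(x_1)$ for every $x_1\in\textrm{pr}^1(\spt\gamma)$, a set of full $\rho$-measure. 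Thus $\tilde u=u$ $\rho$-a.e., so $\tilde u\in\mathcal F$, $D(\tilde u)=D(u)=\max_{\mathcal F}D$ and $\tilde u\ge u$. Moreover $\tilde u\ge u$ gives, for every $(x_2,\ldots,x_N)$, the inequality $c(x,x_2,\ldots,x_N)-\sum_{i\ge2}\tilde u(x_i)\le c(x,x_2,\ldots,x_N)-\sum_{i\ge2}u(x_i)$, so the infimum appearing on the right-hand side of \eqref{eq:infpotential} is $\le\tilde u(x)$; this is one of the two inequalities in \eqref{eq:infpotential}.

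The other inequality in \eqref{eq:infpotential}, namely $\tilde u(x)\le c(x,x_2,\ldots,x_N)-\sum_{i\ge2}\tilde u(x_i)$ for \emph{every} $(x_2,\ldots,x_N)\in X^{N-1}$, is equivalent to saying that $\tilde u$ satisfies the constraint at every point of $X^N$ and not merely $\rho^{\otimes(N)}$-a.e.; this is the step I expect to be the main obstacle. For $N=2$ it is the classical fact that the $c$-transform of any function is automatically $c$-concave, hence admissible everywhere; for $N\ge 3$ this fails at the purely formal level (the naive estimate over-counts the terms $u(x_i)$ with the wrong sign), and one must instead argue as in \cite{BuChaDeP}, promoting the $\rho^{\otimes(N)}$-a.e.\ admissibility of $\tilde u$ to everywhere admissibility by using the optimality of $\gamma$, the fact that (being symmetric) $\spt(\gamma)$ is symmetric, stays off the diagonal by Theorem \ref{thm:offdiagonal}, and projects onto a $\rho$-dense subset of $\spt(\rho)$ on which $\tilde u$ agrees with $u$, together with a careful choice of the representative of $\tilde u$ outside $\spt(\rho)$. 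As everywhere, the non-compactness of $X$ is handled by exhausting $X$ with balls and passing to a diagonal subsequence. Once \eqref{eq:infpotential} is established for $\tilde u$, the proof is complete.
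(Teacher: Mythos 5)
The paper itself gives no proof of this lemma beyond the one-line reference ``the proof follows in the same way as the proof of \cite[Lemma~3.3]{BuChaDeP}.'' Your proposal reconstructs the intended argument correctly up to a point: choosing a good representative of $u$, defining $\tilde u$ as the one-sided $c$-transform, using the optimal $\gamma$ to conclude $\tilde u = u$ $\rho$-a.e.\ (so $\tilde u$ is again a maximizer), and obtaining the easy half of \eqref{eq:infpotential} from $\tilde u\ge u$. This is indeed the structure of the Rachev--R\"uschendorf / Buttazzo--Champion--De~Pascale argument the paper is invoking, so you are on the same track as the paper.

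However, you correctly identify that the real content of the lemma is the \emph{other} half of \eqref{eq:infpotential} --- that $\tilde u$ satisfies the constraint at \emph{every} point, not just $\rho^{\otimes N}$-a.e.\ --- and then you stop: the paragraph ``one must instead argue as in \cite{BuChaDeP}, promoting the a.e.\ admissibility \ldots together with a careful choice of the representative'' is a list of ingredients, not an argument. That is a genuine gap, and moreover the sketch you give around it is unlikely to close it as stated. You begin by replacing $u$ with the upper semicontinuous representative $\hat u(x)=\limsup_{r\to0}\operatorname{ess\,sup}_{B(x,r)}u$; then $\tilde u=\Phi(\hat u)$ is again an infimum of continuous functions and hence upper semicontinuous. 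But promoting ``$\sum_i \tilde u(x_i)\le c$ on a dense set $S^N$'' to ``$\sum_i \tilde u(x_i)\le c$ everywhere on $(\spt\rho)^N$'' by approximation would require $\sum_i\tilde u(x_i)-c$ to be \emph{lower} semicontinuous, which is the wrong direction for the $\limsup$-regularization you chose. You also overstate the $N=2$ case: even there, with a single common potential $u$, the $c$-transform $\tilde u=\Phi(u)$ of an admissible (non-maximizing) $u$ need not be admissible, and it is precisely the optimality of $u$, not ``automatic $c$-concavity,'' that saves the day. So the proposal matches the paper's route in spirit, but it contains an acknowledged and real hole at the decisive step, and the regularization scheme it proposes for plugging that hole does not obviously work.
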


\begin{proof}[Proof of the Theorem \ref{thm:ulip}] 
According to Lemma \ref{lm:infpotentiallemma}, we may choose a Kantorovich potential $u_\alpha$ for the truncated cost $C^{f(\alpha)}$ satisfying, for all $x\in X$,
\[u_\alpha(x)=\inf\left\{c^{f(\alpha)}(x,x_2,\ldots, x_N)-\sum_{j=1}^N u_\alpha(x_j)~|~x_j\in X\right\}.\]
By Proposition \ref{prop:truncatedequality}, due to the choice of $\alpha$, $u_\alpha$ is also a Kantorovich potential for $C$. So, it suffices to show that $u_\alpha$ is Lipschitz. Since $f|_{[\alpha,\infty[}$ and $d$ are Lipschitz, the function $h\colon X\to\R\cup \{-\infty,+\infty\}$, 
\[h(x)=\sum_{1\le i<j\le N}c^{f(\alpha)}(x,x_2,\ldots, x_N)-\sum_{j=2}^Nu_\alpha(x_j)~~~\text{for all }x\in X,\]
is Lipschitz with a Lipschitz constant that does not depend on $(x_2,\ldots, x_N)$. Since the infimum of a family of uniformly Lipschitz functions is Lipschitz, 
we have that $u_{\alpha}$ is Lipschitz.
\end{proof}

Finally, we can move on to the continuity properties of the cost functional $C(\rho)$ with respect to the marginal $\rho$. 

\begin{proposition}\label{thm:ccontinuous}
Under the same assumptions as in Theorem \ref{thm:ulip}, let $(\rho_n)$ be a sequence in $\mathcal{P}(X^N)$, weakly converging to some $\rho_\infty\in \mathcal{P}(X^N)$ that satisfies $(A)$. If
\begin{equation}\label{eq:unifconvergence}
\int_{X\setminus B(o,r)}f\left(2d(x,o)\right)\,\d\rho_n(x) \to 0 \quad \text{ uniformly when } r\to 0,
\end{equation}
then 
\[
\lim_{n\to \infty}C(\rho_n)=C(\rho).
\]
\end{proposition}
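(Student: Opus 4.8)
The plan is to show both $\limsup_{n}C(\rho_n)\le C(\rho_\infty)$ and $\liminf_{n}C(\rho_n)\ge C(\rho_\infty)$, where I write $C(\rho_\infty)$ for what the statement calls $C(\rho)$, by combining the duality of Theorem \ref{thm:dual} with the uniform-tail hypothesis \eqref{eq:unifconvergence}. First I would use assumption $(A)$ for $\rho_\infty$ together with the weak convergence $\rho_n\rightharpoonup\rho_\infty$ to fix a single $\beta>0$ such that $\sup_{x\in X}\rho_n(B(x,\beta))<\frac{1}{N(N-1)^2}$ for all large $n$ (and for $\rho_\infty$); a Portmanteau-type argument on the open balls, or a direct covering argument using \eqref{eq:unifconvergence} near the diagonal, gives this uniformity. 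With this common $\beta$, Theorem \ref{thm:offdiagonal} and Proposition \ref{prop:truncatedequality} supply a common $\alpha>0$ with $\alpha<f^{-1}\!\left(\frac{N^2(N-1)}{2}f(\beta)\right)$ such that every optimal plan for $\rho_n$ (and for $\rho_\infty$) avoids $D_\alpha$, so that $C(\rho_n)=C^{f(\alpha)}(\rho_n)$ for all large $n$. This reduces the problem to proving continuity of the \emph{bounded} cost functional $\rho\mapsto C^{f(\alpha)}(\rho)$, where $c^{f(\alpha)}$ is continuous and bounded above by $\frac{N(N-1)}{2}f(\alpha)$ but still possibly unbounded below (its negative tail is controlled precisely by \eqref{eq:unifconvergence}).

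For the upper bound $\limsup_n C(\rho_n)\le C(\rho_\infty)$, I would take an optimal plan $\gamma_\infty\in\Gamma(\rho_\infty)$ for $c^{f(\alpha)}$ and construct competitors $\gamma_n\in\Gamma(\rho_n)$ with $\gamma_n\rightharpoonup\gamma_\infty$; gluing along a coupling between $\rho_n$ and $\rho_\infty$ that realizes weak convergence (e.g. via Skorokhod/Lusin on a common probability space), one gets $C^{f(\alpha)}(\gamma_n)\to C^{f(\alpha)}(\gamma_\infty)$ provided the negative part of $c^{f(\alpha)}$ is uniformly integrable against $(\gamma_n)$ — and that uniform integrability is exactly what \eqref{eq:unifconvergence} delivers, since on $\spt\gamma_n$ one bounds $-c^{f(\alpha)}$ from below by $h$-type terms $\sum g(2d(x_i,o))$ as in the proof of Proposition \ref{prop:existsminimizer}. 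For the lower bound $\liminf_n C(\rho_n)\ge C(\rho_\infty)$, I would instead use the dual side: let $u$ be a Lipschitz Kantorovich potential for $c$ (equivalently $c^{f(\alpha)}$) associated with $\rho_\infty$, furnished by Theorem \ref{thm:ulip}; then $N\int_X u\,\d\rho_n\le C(\rho_n)$ by admissibility of $u$ (the constraint $\sum_i u(x_i)\le c^{f(\alpha)}\le c$ is independent of the marginal), and $N\int_X u\,\d\rho_n\to N\int_X u\,\d\rho_\infty=C(\rho_\infty)$ because $u$ is Lipschitz — after truncating $u$ at a large scale, controlled again by \eqref{eq:unifconvergence}, to legitimately pair the continuous-but-possibly-unbounded $u$ against the weakly convergent $\rho_n$.

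The main obstacle I anticipate is handling the unboundedness carefully in two places: (i) upgrading $(A)$ for $\rho_\infty$ to a uniform small-concentration bound for the $\rho_n$ with a single $\beta$, which needs the weak convergence and a little care because $\sup_x\rho(B(x,r))$ is only upper semicontinuous in the wrong direction under weak limits — here one genuinely uses \eqref{eq:unifconvergence} or a compactness argument to cover the relevant part of $X$ by finitely many balls; and (ii) justifying the two limit passages $\int c^{f(\alpha)}\,\d\gamma_n\to\int c^{f(\alpha)}\,\d\gamma_\infty$ and $\int u\,\d\rho_n\to\int u\,\d\rho_\infty$ for functions that are continuous but unbounded below, where the hypothesis \eqref{eq:unifconvergence} is precisely the uniform integrability of the offending tails. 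Once these technical points are in place, the conclusion $\lim_n C(\rho_n)=C(\rho_\infty)$ follows by sandwiching.
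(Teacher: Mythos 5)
Your approach takes a genuinely different route from the paper, and while it contains the right intuitions, it has gaps that the paper's argument cleanly avoids. The paper does \emph{not} prove the two one-sided inequalities separately, and it does \emph{not} truncate the diagonal singularity. Instead, it invokes the stability result of Buttazzo--Champion--De Pascale (\cite[Theorem 3.9]{BuChaDeP}) for the costs $C_R$ (truncated from \emph{below}, so singular on the diagonal but bounded below), and then shows that the truncation error is uniformly small: $0\le C_R(\rho_n)-C(\rho_n)<\varepsilon$ for all $n\in\N\cup\{\infty\}$ once $R$ is large. This is a single one-sided estimate, obtained by integrating $c_R-c$ (supported where $d(x_i,x_j)\ge R$) against an optimal symmetric plan $\gamma_n$, bounding it by $-2N(N-1)\int_{d(x,o)\ge R/2}f(2d(x,o))\,\d\rho_n$, and quoting hypothesis \eqref{eq:unifconvergence}. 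No Kantorovich potentials, no common $\alpha$, no weak-convergence gluing.

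Concretely, your plan runs into the following difficulties. \textbf{(i)} To get a common $\alpha$ with $C(\rho_n)=C^{f(\alpha)}(\rho_n)$ for all large $n$, you need a single $\beta$ so that $\sup_x\rho_n(B(x,\beta))<\frac{1}{N(N-1)^2}$ uniformly in $n$; this does not follow from $(A)$ for $\rho_\infty$ together with weak convergence (Portmanteau gives the wrong inequality for open balls and only pointwise estimates for closed ones), and the paper never needs it. \textbf{(ii)} More seriously, truncating from \emph{above} ($c^{f(\alpha)}$) removes the singularity but leaves the tail unbounded below --- which is the actual source of difficulty --- so your ``reduction'' leaves the hard part untouched. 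The paper's truncation $c_R$ from \emph{below} is the one that addresses the tail, and that is why it reduces to a known result. \textbf{(iii)} In the lower bound via the dual side, you need $\int_X u\,\d\rho_n\to\int_X u\,\d\rho_\infty$ for a Lipschitz (hence possibly linearly growing) potential $u$. Hypothesis \eqref{eq:unifconvergence} controls the tails of $f(2d(\cdot,o))$, which for log-type $f$ decays far more slowly than linearly; the Lipschitz bound alone therefore does not yield the uniform integrability of $u$ against $(\rho_n)$, and the tail control you invoke is not the right one. (One would need the sharper pointwise bound $u\gtrsim f(2d(\cdot,o))$ coming from the duality construction, but you have not established it, and it is not needed in the paper's argument.)
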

\begin{proof}
By \cite[Theorem 3.9]{BuChaDeP}, the above result holds for the singular costs $C_R$ which are bounded from below.
Therefore, it suffices to show that for each $\varepsilon > 0$ there exists $R \in \N$ such that
\[
 |C(\rho_n) - C_R(\rho_n)| < \varepsilon
\]
for all $n \in \N \cup \{\infty\}$.
Since the inequality $C \le C_R$ always holds, it suffices to show that $C_R(\rho_n) - C(\rho_n) < \varepsilon$ for $R$ large enough.
In order to obtain this, we take a minimizer $\gamma_n$ for $C$ with marginals $\rho_n$ (given by Proposition \ref{prop:existsminimizer}) and estimate, assuming $f(R/2) \le 0$ by taking $R$ large enough and $\gamma_n \in \Gamma^{sym}(\rho_n)$ by Proposition \ref{prop:symplans},
\begin{align*}
 C_R(\rho_n)- C(\rho_n) & \le \int_{X^N} (C_R-C)\,\d\gamma_n = \int_{X^N} \sum_{1\le i<j\le N}\max\{f(R) - f(d(x_i,x_j)), 0\}\,\d\gamma_n\\
 & \le - N(N-1)\int_{d(x_1,x_2) \ge R} f(d(x_1,x_2))\,\d\gamma_n\\
 & \le - N(N-1)\int_{d(x_1,x_2) \ge R} f(\max\{2d(x_1,o),2d(x_2,o)\})\,\d\gamma_n\\
 & \le - 2N(N-1)\int_{d(x,o) \ge \frac{R}2} f(2d(x,o))\,\d\rho_n < \varepsilon,
\end{align*}
for large enough $R$ by assumption \eqref{eq:unifconvergence}.
\end{proof}

\section{Monge Problem for $\log$-type costs}
\label{sec:app}

Regarding the existence of Monge-type minimizers in \eqref{MKO}, the first positive result for repulsive type costs is shown in \cite{CoDePDMa} where, in dimension $d=1$, $X=\R$, M. Colombo, L. De Pascale and S. Di Marino prove that, for an absolutely continuous measure, a symmetric optimal plan $\gamma$ is always induced by a cyclical optimal map $T$. One important ingredient of that proof relied on the fact that for symmetric cost functions \eqref{MKO} can be restricted for a class of symmetric transport plans (see Definition \ref{def:symmeasures} and Propostion \ref{prop:symplans}).

\begin{theorem}[Colombo, De Pascale and Di Marino, \cite{CoDePDMa}]\label{teo:1DN} Let $\mu \in \mathcal{P}(\R)$ be an absolutely continuous probability measure and $f\colon\R\to \R$ strictly convex, bounded from below and non-increasing function. Then there exists a unique optimal symmetric plan $\gamma \in \Gamma^{sym} (\mu)$ that solves
$$ \min_{ \gamma \in \Gamma^{sym}(\mu) } \int_{\R^{N} } \sum_{1 \leq i < j \leq N}  f(|x_j-x_i|) \, \d  \gamma. $$
Moreover, this plan is induced by an optimal cyclical map $T$, that is, $\gamma_{sym}=\frac 1{N!} \sum_{\sigma \in \mathfrak{S}_N} \sigma_{\sharp} \gamma_T$, where $\gamma_T=(Id,T,T^{(2)} , \ldots, T^{(N-1)})_{\sharp} \mu$. An explicit optimal cyclical map is  
$$ T(x) =\begin{cases}  F_{\mu}^{-1} (F_{\mu}(x) + 1/N) \qquad & \text{ if }F_{\mu}(x) \leq (N-1)/N \\ F_{\mu}^{-1} ( F_{\mu}(x) +1 - 1/N ) & \text{ otherwise.} \end{cases}$$
Here $F_{\mu}(x)=\mu ( -\infty , x]$ is the distribution function of $\mu$, and $F_{\mu}^{-1}$ is its lower semicontinuous left inverse. 
\end{theorem}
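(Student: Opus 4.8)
The plan is to follow the approach of Colombo, De Pascale and Di Marino: reduce to symmetric plans, pass to an \emph{ordered} formulation, establish a monotonicity (``non-crossing'') property of the support of optimal plans using strict convexity of $f$, and finally exploit absolute continuity of $\mu$ together with the marginal constraints to identify $T$ explicitly.

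By Proposition \ref{prop:symplans} it suffices to minimize over $\Gamma^{sym}(\mu)$, and a minimizer exists (e.g.\ by Proposition \ref{prop:existsminimizer}, whose hypotheses (A) and (B) hold automatically here since $f$ is bounded below and $\mu$ is absolutely continuous). Because $c$ is symmetric, the sorting map $\mathrm{sort}(x_1,\dots,x_N)=(x_{(1)},\dots,x_{(N)})$ is cost-preserving, so the problem is equivalent to minimizing $\int_{\bar\Delta}c\,\d\lambda$ over $\lambda\in\mathcal{P}(\bar\Delta)$, $\bar\Delta=\{x_1\le\cdots\le x_N\}$, whose coordinate marginals $\mu_1,\dots,\mu_N$ (the ``order statistics'') satisfy $\tfrac1N\sum_k\mu_k=\mu$; conversely, symmetrizing such a $\lambda$ gives a plan in $\Gamma^{sym}(\mu)$ of the same cost. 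By Theorem \ref{thm:offdiagonal}, applied to the symmetrization of $\lambda$, every optimal $\lambda$ is supported in the open simplex away from the diagonal, so there $f$ is genuinely continuous and strictly convex.

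The key structural step is that the support of an optimal $\lambda$ is \emph{monotone}: if $\bar x=(\bar x_1<\cdots<\bar x_N)$ and $\bar y=(\bar y_1<\cdots<\bar y_N)$ both belong to $\spt\lambda$, then $\bar x_k\le\bar y_k$ for all $k$, or $\bar x_k\ge\bar y_k$ for all $k$. If not, two coordinates ``cross'', and one builds a competitor by removing a little mass near $\bar x$ and $\bar y$ and re-inserting it with the crossing coordinates swapped between the two configurations. In the resulting change of cost only the pairwise terms involving the swapped indices survive, and strict convexity of $f$ on the off-diagonal region forces this change to be strictly negative, contradicting optimality. I expect this rearrangement-plus-strict-convexity computation to be the main obstacle: one has to choose the right pair of indices to swap and check that the corresponding (strict) rearrangement inequality has a definite sign independently of the two configurations. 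Once monotonicity is known, $\spt\lambda$ lies on a monotone curve in $\R^N$; since each $\mu_k$ is atomless, this curve is, up to a null set, the graph of a map over its first coordinate, so $\lambda$---hence the symmetric plan $\gamma$---is induced by a transport map.

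It remains to identify the map and to get uniqueness. Parametrize the monotone curve by the quantile $s\in[0,1/N]$ of its first coordinate, writing the sorted tuple as $(g_1(s),\dots,g_N(s))$ with each $g_k$ nondecreasing. The constraint $\tfrac1N\sum_k (g_k)_\sharp\nu=\mu$, with $\nu$ the uniform probability measure on $[0,1/N]$, forces the quantiles of consecutive coordinates to spread out; a discrete Jensen-type inequality---comparing the configuration at quantiles $t_1\le\cdots\le t_N$ with the one at the equally spaced quantiles $t_0,t_0+\tfrac1N,\dots,t_0+\tfrac{N-1}N$ of the same average, using strict convexity of $f$ once more---shows that the unique minimizing configuration has uniform quantile spacing $1/N$. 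Hence $g_k(s)=F_\mu^{-1}\big(s+\tfrac{k-1}N\big)$, which is exactly the sorted version of the orbit $x,T(x),\dots,T^{(N-1)}(x)$ of the stated map $T$, the shift by $1/N$ of the quantile interval read modulo $1$. Since that shift preserves Lebesgue measure, $T_\sharp\mu=\mu$, so $\gamma_T=(\Id,T,\dots,T^{(N-1)})_\sharp\mu$ is well-defined, its symmetrization belongs to $\Gamma^{sym}(\mu)$, attains the minimum by the previous steps, and is the \emph{unique} minimizer because the monotonicity and uniform-spacing properties above were both strict.
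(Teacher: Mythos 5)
First, note that the paper does not in fact prove this statement: Theorem~\ref{teo:1DN} is stated as a citation of Colombo, De Pascale and Di Marino \cite{CoDePDMa}, and the only thing the paper contributes is the remark afterward that the result extends, by truncation, to unbounded $f$ satisfying (F1)--(F2) and (B). So there is no ``paper's own proof'' to compare against.

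As a free-standing proof attempt, your outline has the right global shape (pass to the ordered simplex, prove a non-crossing property of the support, identify the quantile-shift map), but both pivotal steps are only announced, and the one you yourself flag as ``the main obstacle'' does not go through by the mechanism you describe. Exchanging a single crossing coordinate $k$ between two sorted configurations $\bar x,\bar y$ does \emph{not} have a definite sign for $N\ge 3$: the rearranged pair of distances $\{|\bar y_k-\bar x_i|,|\bar x_k-\bar y_i|\}$ is a contraction of $\{|\bar x_k-\bar x_i|,|\bar y_k-\bar y_i|\}$ precisely when $(\bar y_i-\bar x_i)(\bar y_k-\bar x_k)\le 0$ and an expansion otherwise, so with several crossings the contributions alternate and convexity alone cannot decide the total. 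The operation that actually works is the coordinatewise min/max competitor $\tilde x_k=\min(\bar x_k,\bar y_k)$, $\tilde y_k=\max(\bar x_k,\bar y_k)$, which preserves sortedness, is admissible for multi-marginal $c$-cyclical monotonicity, and makes \emph{every} cross pair a (weak) contraction with strict inequality at a crossing; your phrase ``re-inserting it with the crossing coordinates swapped'' leaves this unspecified, and as written could be read as the failing single-coordinate swap. The subsequent ``discrete Jensen'' step identifying the uniform $1/N$ quantile spacing is likewise sketched rather than proved, and is not how \cite{CoDePDMa} proceed (they work directly with quantile intervals and marginal identification). Finally, two smaller points: Proposition~\ref{prop:existsminimizer} and Theorem~\ref{thm:offdiagonal} both require hypothesis (F2), $f(t)\to+\infty$ as $t\to 0^+$, which is not assumed here ($f$ is bounded, indeed bounded above by $f(0)$), so they cannot be invoked as black boxes; existence of a minimizer is nonetheless elementary here since the cost is continuous, bounded, and $\Gamma^{sym}(\mu)$ is weakly compact, while off-diagonal concentration of the support is neither needed nor true for bounded $f$.
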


We remark that, due to Theorem \ref{thm:existence}, the above Theorem \ref{teo:1DN} also holds for unbounded cost functions satisfying $(F1)$ and $(F2)$ and under the additional assumption $(B)$ on the absolutely continuous measure $\mu$. This can be seen for instance by taking a minimizer for the unbounded cost and observing that its restriction to a bounded set is also a minimizer of a truncated for and thus of the form given by Theorem \ref{teo:1DN}.

\subsection{$\operatorname{Log}$-type cost ($N=2$)}
Here we consider $X = \R^d$ with $d \ge 1$.

\begin{theorem}\label{thm:N2Monge}Let $\rho \in \mathcal{P}(\R^d)$ be a probability measure such that $(A)$ and $(B)$ hold. Then there exists a unique optimal plan $\gamma_O \in \Gamma (\rho,\rho)$ for the problem
\be\label{app:n2}
\min_{ \gamma \in \Gamma(\rho,\rho) } \int_{\R^{d}\times \R^d } -\log(|x_1-x_2|) \, \d  \gamma(x_1,x_2). 
\ee
Moreover, this plan is induced by an optimal map $T$, that is, $\gamma= (Id,T)_{\sharp} \rho$, and $T(x)=x -\frac{ \nabla u } {| \nabla u|^2 }$ $\rho$-almost everywhere, where $u$ is a Lipschitz maximizer for the dual problem \eqref{KMOT}.
\end{theorem}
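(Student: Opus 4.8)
The plan is to reduce the logarithmic cost to the framework already established in the previous sections and then run the standard two-marginal argument of Gangbo--McCann / Buttazzo--De Pascale--Gori-Giorgi. First I would verify that $f(t)=-\log t$ satisfies (F1) and (F2): it is continuous and strictly decreasing on $]0,\infty[$ and $f(t)\to+\infty$ as $t\to 0^+$. Thus Proposition~\ref{prop:existsminimizer} gives the existence of a minimizer $\gamma_O$ for \eqref{app:n2}, Theorem~\ref{thm:dual} gives the duality \eqref{eq:dualityagain}, and Theorem~\ref{thm:ulip} applies because $f'(t)=-1/t$ is bounded on $[\alpha,\infty[$ for any $\alpha>0$, so $f|_{[\alpha,\infty[}$ is Lipschitz; hence there is a Lipschitz Kantorovich potential $u$. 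By Theorem~\ref{thm:offdiagonal} (or Proposition~\ref{prop:truncatedequality}) every optimal plan is supported off the diagonal, i.e. $\spt(\gamma_O)\subset\{(x_1,x_2) : |x_1-x_2|\ge\alpha\}$ for a suitable $\alpha>0$, so on $\spt(\gamma_O)$ the cost $-\log|x_1-x_2|$ is a smooth, in particular $C^1$, function of $(x_1,x_2)$.

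Next I would extract the first-order optimality condition. On $\spt(\gamma_O)$ one has the equality $u(x_1)+u(x_2)=-\log|x_1-x_2|$, while everywhere $u(x_1)+u(x_2)\le -\log|x_1-x_2|$. Fix $\rho$-a.e.\ $x_1$ a point of differentiability of the Lipschitz function $u$ (Rademacher's theorem, valid on $\R^d$), and let $x_2$ be any point with $(x_1,x_2)\in\spt(\gamma_O)$. Then the map $y\mapsto u(y)+u(x_2)+\log|y-x_2|$ attains an interior maximum (value zero) at $y=x_1$, and since it is differentiable there, its gradient vanishes: $\nabla u(x_1)+\dfrac{x_1-x_2}{|x_1-x_2|^2}=0$, i.e.
\[
x_2 \;=\; x_1 \;+\; \frac{\nabla u(x_1)}{|\nabla u(x_1)|^{2}}.
\]
Wait --- one must check $\nabla u(x_1)\ne 0$: indeed $\nabla u(x_1)=-(x_1-x_2)/|x_1-x_2|^2$ has norm $1/|x_1-x_2|>0$ since $x_1\ne x_2$ on $\spt(\gamma_O)$. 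The crucial consequence is that $x_2$ is \emph{uniquely determined} by $x_1$ through the formula above; defining $T(x):=x-\nabla u(x)/|\nabla u(x)|^2$ (note the sign: with $f=-\log$, $\nabla u = -(x-T(x))/|x-T(x)|^2$, so $T(x)=x+\nabla u/|\nabla u|^2$ --- I would state the sign consistently with the theorem's claim $T(x)=x-\nabla u/|\nabla u|^2$ by tracking the constraint $u(x_1)+u(x_2)\le-\log|x_1-x_2|$ carefully at the outset) we conclude $\gamma_O$ is concentrated on the graph of $T$, hence $\gamma_O=(\Id,T)_\sharp\rho$.

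For uniqueness, I would argue that \emph{any} optimal plan $\gamma'$ shares the same Kantorovich potential $u$ (by Proposition~\ref{prop:truncatedequality}, optimal potentials for $C$ are exactly those for the truncated problem, and the complementary-slackness relation $u(x_1)+u(x_2)=c(x_1,x_2)$ holds $\gamma'$-a.e.); running the same differentiation argument shows $\gamma'$ is concentrated on the graph of the same map $T$, and since $\gamma'$ has first marginal $\rho$ this forces $\gamma'=(\Id,T)_\sharp\rho=\gamma_O$. The main obstacle is the measurability/a.e.\ bookkeeping: one needs that the set of $x_1$ where $u$ is differentiable has full $\rho$-measure and that for $\gamma_O$-a.e.\ $(x_1,x_2)$ the equality $u(x_1)+u(x_2)=-\log|x_1-x_2|$ holds simultaneously with differentiability of $u$ at $x_1$ --- this is where one uses that $u$ is genuinely Lipschitz (not merely locally so, or one restricts to a bounded set as in the proof of Theorem~\ref{thm:dual}) together with Fubini to intersect the relevant full-measure sets. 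A secondary technical point is justifying that the interior-maximum argument is legitimate, i.e.\ that for the chosen $x_1$ nearby competitors $y$ still satisfy $|y-x_2|\ge\alpha/2>0$ so that $\log|y-x_2|$ is smooth there; this follows from the off-diagonal estimate of Theorem~\ref{thm:offdiagonal}.
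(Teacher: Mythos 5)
Your proof follows essentially the same route as the paper: invoke the existence, duality, and Lipschitz-potential results from the earlier sections, then run the standard first-order/envelope argument at a point of differentiability of $u$ to solve for $x_2$ in terms of $x_1$ and conclude that $\gamma$ is supported on a graph. You have also correctly caught a sign discrepancy that the paper itself contains: differentiating $F(x_1,x_2)=u(x_1)+u(x_2)+\log|x_1-x_2|$ at an interior maximum on $\spt\gamma$ gives $\nabla u(x_1) = -\dfrac{x_1-x_2}{|x_1-x_2|^2}$, hence $T(x)=x+\dfrac{\nabla u(x)}{|\nabla u(x)|^2}$, whereas the paper's proof writes $\nabla u(x_1)=\dfrac{x_1-x_2}{|x_1-x_2|^2}$ and the statement reads $T(x)=x-\dfrac{\nabla u}{|\nabla u|^2}$; the paper's sign is reversed. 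Your version is also more complete than the paper's on two points the paper leaves implicit or omits: you make the Rademacher/differentiability/measure-zero bookkeeping explicit and verify $\nabla u\ne 0$ on $\spt\gamma$ (using the off-diagonal estimate), and you actually argue uniqueness (any optimal plan satisfies complementary slackness with the same potential $u$, hence is supported on the same graph and equals $(\Id,T)_\sharp\rho$), whereas the paper asserts uniqueness in the theorem but never proves it.
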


\begin{proof} Let us consider $\gamma$ a minimizer for the problem \eqref{app:n2} and $u$ a maximizer of the dual problem, which is Lipschitz by Theorem \ref{thm:ulip}. Then,  
$$ F(x_1,x_2) = u(x_1) + u(x_2) +\log(|x_1-x_2|) \leq 0,$$
for $\rho \otimes \rho$-almost every $(x_1,x_2)\in \R^d\times \R^d$.
Moreover, $F=0$ $\gamma$-almost everywhere. But then $F$ has a maximum on the support of $\gamma$ and so $\nabla F=0$ in this set; in particular we have that $\nabla u (x_1) = \frac{(x_1-x_2)}{|x_1-x_2|^2} $ on the support of $\gamma$. By solving this equation for $x_2$, we have
$$ x_2 = x_1 - \frac{\nabla u(x_1)}{\vert \nabla u(x_1)\vert^2}, \quad \gamma-\text{almost everywhere}, $$
which implies $\gamma = (Id,T)_{\sharp}\mu$ as we wanted to show.
\end{proof}

\section*{Acknowledgments} The authors thank Michael Seidl for fruitful discussions and for suggesting the charged wire model presented in the introduction of this paper.

\bibliographystyle{siam}
\bibliography{refsPhDThesis}

\end{document}